\theoremstyle{plain}
\theoremstyle{definition}
\newtheorem{thm}{Theorem}[section]
\newtheorem{cor}[thm]{Corollary}
\newtheorem{lem}[thm]{Lemma}
\newtheorem{prop}[thm]{Proposition}
\newtheorem{defn}[thm]{Definition}
\newtheorem{ex}[thm]{Example}
\def\XMod{\bm{\mathsf{XMod}}}
\def\XSq{\bm{\mathsf{X^2Mod}}}
\def\GpGd{\bm{\mathsf{GpGd}}}
\def\Cat{\bm{\mathsf{Cat}}}
\def\Cat{\bm{\mathsf{Cat}}}
\def\GpGd{\bm{\mathsf{GpGd}}}
\def\XMod{\bm{\mathsf{XMod}}}
\def\XMod{\bm{\mathsf{XMod}}}
\def\GpGd{\bm{\mathsf{GpGd}}}
\def\Cat{\bm{\mathsf{Cat}}}
\def\XSq{\bm{\mathsf{X^2Mod}}}
\def\GGdC/G{\bm{\mathsf{GpGpdCov/G}}}
\def\GGdCov/X{\bm{\mathsf{GpGpdCov/\pi X}}}
\def\GdC/G{\bm{\mathsf{GpdCov/G}}}
\def\GdA(G){\bm{\mathsf{GpdAct(G)}}}
\def\Act(G){\bm{\mathsf{GpdAct(G)}}}
\def\Cov/G{\bm{\mathsf{GpdCov/G}}}
\def\epsilon{\varepsilon}
\begin{document}
\title{Categories internal to crossed modules}

\author[a]{Tunçar ŞAHAN\thanks{T. Şahan (e-mail : tuncarsahan@gmail.com)}}
\author[b]{Jihad Jamil Mohammed\thanks{J.J. Mohammed (e-mail : cihad.cemil@gmail.com)}}
\affil[a]{\small{Department of Mathematics, Aksaray University, Aksaray, TURKEY}}
\affil[b]{\small{Wan Primary School, Zakho, Iraq}}

\date{}

\maketitle

\begin{abstract}
In this study, internal categories in the category of the crossed modules are characterized and it has been shown that there is a natural equivalence between the category of the crossed modules over crossed modules, i.e. crossed squares, and the category of the internal categories within the category of crossed modules. Finally, we obtain examples of crossed squares using this equivalence.\end{abstract}

\noindent{\bf Key Words:} Crossed module, internal category, crossed square.
\\ {\bf Classification:} 18D35, 18G50, 20J15, 20L05.

\section{Introduction }
Crossed modules are first defined in the works of Whitehead \cite{Wh46,Wh48,Wh49} and has been found important in many areas of mathematics including homotopy theory, group representation theory, homology and cohomology on groups, algebraic K-theory, cyclic homology, combinatorial group theory and differential geometry. See \cite{Br84,BHi78,BHu82,BL87} for applications of crossed modules.  Later, it was shown that the categories of the internal categories in the category of groups and the category of the crossed modules are equivalent \cite{BS76,Lod82}.

Mucuk et al. \cite{MSA15} interpret the concept of normal subcrossed module and quotient crossed module concepts in the category of internal categories within groups, that is group-groupoids. The equivalences of the categories given in \cite[Theorem 1]{BS76} and \cite[Section 3]{Por87} enable to generalize some results on group-groupoids to the more general internal groupoids for an arbitrary category of groups with operations (see for example \cite{AAMS13}, \cite{MA15}, \cite{MKSA11} and \cite{MS14}).

Lichtenbaum, Schlessinger \cite{LS67}, and Gerstenhaber \cite{Ge66} have defined the concept of a crossed module on associative and commutative algebras. In \cite{Ar97} the categories of crossed modules and of 2-crossed modules on commutative algebras are linked with an equivalence.

Crossed square is first described to be applied to algebraic K-theoretic problems \cite{GWLod81}. Crossed squares are two-dimensional analogous of crossed modules and model all connected homotopy 3-types (hence all 3-groups) and correspond in much the same way to pairs of normal subgroups while crossed modules model all connected homotopy 2-types and groups model all connected homotopy 1-types.

Recently, freeness conditions for 2-crossed modules and crossed squares are given in \cite{MPor98} and \cite{MPor00}. See also \cite{AP88} for commutative algebra case.

Main objective of this study is to characterize the internal categories within the category of crossed modules and to prove that the category of the internal categories in the category of crossed modules and the category of crossed squares are equivalent. Hence this equivalence allow us to produce more examples of crossed squares.

\section{Preliminaries}

\subsection{Extensions and crossed modules}

Following are detailed descriptions of the ideas given in \cite{Por87} for the case of groups. An exact sequence of the form
\[\xymatrix{
	\bm{\mathsf{0}} \ar[r] &   A \ar@{->}[r]^-{i}  &   E \ar@{->}[r]^-{p} &   B \ar[r]  & \bm{\mathsf{0}} }\]
is called \textit{short exact sequence} where $\bm{\mathsf{0}}$ is the group with one element. Here $i$ is a monomorphism, $p$ is an epimorphism and $\ker p=A$. In a short exact sequence the group $E$ is called an extension of $B$ by $A$. An extension is called \textit{split} if there exist a group homomorphism $s:B\to E$ such that $ps={{1}_{B}}$.

Let $E$ be a split extension of $B$ by $A$. Then the function
\[\begin{array}{rcccl}
\theta & : & E & \rightarrow  & A\times B  \\
&  & e & \mapsto  & \left( e-sp\left( e \right),p\left( e \right) \right)  \\
\end{array}\]
is a bijection. The inverse of $\theta$ is given by ${\theta }^{-1}\left( a,b \right)=a+s\left( b \right)$.

Thus we can define a group structure on $A\times B$ such that $\theta $ is an isomorphism of groups. Let $\left( a,b \right),\left( {{a}_{1}},{{b}_{1}} \right)\in A\times B$. Then
\[\begin{array}{rl}
\left( a,b \right)+\left( {{a}_{1}},{{b}_{1}} \right) & =\theta \left( {{\theta }^{-1}}\left( \left( a,b \right)+\left( {{a}_{1}},{{b}_{1}} \right) \right) \right) \\
& =\theta \left( {{\theta }^{-1}}\left( a,b \right)+{{\theta }^{-1}}\left( {{a}_{1}},{{b}_{1}} \right) \right) \\
& =\theta \left( a+s\left( b \right)+{{a}_{1}}+s\left( {{b}_{1}} \right) \right) \\
& =\left( a+s\left( b \right)+{{a}_{1}}+s\left( {{b}_{1}} \right)-s\left( {{b}_{1}} \right)-s\left( b \right),b+{{b}_{1}} \right) \\
& =\left( a+\left( s\left( b \right)+{{a}_{1}}-s\left( b \right) \right),b+{{b}_{1}} \right).
\end{array}\]

$A\times B$ is called the semi-direct product group of $A$ and $B$ with the operaton given above and denoted by $A\rtimes B$. Here we note that a split extension of $B$ by $A$ defines an (left) action of $B$ on $A$ with
\[b\cdot a=s\left( b \right)+a-s\left( b \right)\] for $a\in A$ and $b\in B$.

These kind of actions are called derived actions \cite{Orz72}. Every group $A$ has a split extension by itself in a natural way which gives rise to the conjugation action as
\[\xymatrix{
	\bm{\mathsf{0}} \ar[r] &   A \ar@{->}[r]^-{i}  &   A\rtimes   A \ar@{->}[r]_-{p} &   A \ar@/_/[l]_-{s} \ar[r]  & \bm{\mathsf{0}} }\]
where $i\left( a \right)=\left( a,0 \right)$, $p\left( a,{{a}_{1}} \right)={{a}_{1}}$ and $s\left( a \right)=\left( 0,a \right)$ for $a,{{a}_{1}}\in A$.

\begin{defn}
	Let $A$ and $B$ two groups and let $B$ acts on $A$ on the left. Then a group homomorphism $\alpha :A\to B$ is called a crossed module if ${{1}_{A}}\times \alpha \colon A\rtimes A\rightarrow A\rtimes B$ and $\alpha \times {{1}_{B}}\colon A\rtimes B\rightarrow B\rtimes B$ are group homomorphisms\cite{Por87}.
	%\[\xymatrix{
	%	\bm{\mathsf{0}} \ar[r] &   A \ar@{->}[r]^-{\iota}\ar@{->}[d]_{1_{  A}}  &   A\rtimes   A \ar@{->}[r]_-{p} \ar@{->}[d]_{1_{  A}\times\alpha} &   A \ar@/_/[l]_-{s} \ar[r] \ar@{->}[d]_{\alpha} & \bm{\mathsf{0}} \\
	%	\bm{\mathsf{0}} \ar[r] &   A \ar@{->}[r]^-{\iota} \ar@{->}[d]_{\alpha} &   A\rtimes   B \ar@{->}[r]_-{p} \ar@{->}[d]_{\alpha\times 1_{  B}} &   B \ar@{->}[d]_{1_{  B}} \ar@/_/[l]_-{s} \ar[r] & \bm{\mathsf{0}}\\
	%	\bm{\mathsf{0}} \ar[r] &   B \ar@{->}[r]^-{\iota}  &   B\rtimes   B \ar@{->}[r]_-{p}  &   B \ar@/_/[l]_-{s} \ar[r]  & \bm{\mathsf{0}} }\]
\end{defn}

A crossed module is denoted by $\left( A,B,\alpha  \right)$. It is useful to give the definition of crossed modules in terms of group operations and actions.

\begin{prop}\label{propxmod}
	Let $A$ and $B$ be two groups, $\alpha :A\to B$ a group homomorphism and $B$ acts on $A$. Then $\left( A,B,\alpha  \right)$ is a crossed module if and only if
	\begin{enumerate}[label={\textbf{(CM \arabic{*})}}, leftmargin=2cm]
		\item $\alpha \left( b\cdot a \right)=b+\alpha \left( a \right)-b$ and
		\item $\alpha \left( a \right)\cdot {{a}_{1}}=a+{{a}_{1}}-a$
	\end{enumerate}	
	for all $a,{{a}_{1}}\in A$ and $b\in B$ \cite{Por87}.
\end{prop}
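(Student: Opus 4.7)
The plan is to unpack the definition directly: each of the two maps $1_A\times\alpha$ and $\alpha\times 1_B$ is automatically set-theoretically well-defined, so asking that it be a group homomorphism is a nontrivial condition only on how it interacts with the semi-direct product operations on either side. I would compute that condition explicitly for each map and identify it with one of the two axioms.

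First, I would treat $\alpha\times 1_B\colon A\rtimes B\to B\rtimes B$, being careful that the codomain uses the conjugation action of $B$ on itself while the domain uses the given action of $B$ on $A$. Expanding the operation in $A\rtimes B$ first and then applying $\alpha\times 1_B$ produces $(\alpha(a)+\alpha(b\cdot a'),\,b+b')$, while applying $\alpha\times 1_B$ first and then using the conjugation formula in $B\rtimes B$ produces $(\alpha(a)+b+\alpha(a')-b,\,b+b')$. Cancelling $\alpha(a)$ in the first coordinate shows that $\alpha\times 1_B$ is a homomorphism for all inputs if and only if $\alpha(b\cdot a')=b+\alpha(a')-b$ holds for all $a'\in A$ and $b\in B$, which is precisely (CM 1).

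Next, for $1_A\times\alpha\colon A\rtimes A\to A\rtimes B$ the domain carries the conjugation action of $A$ on itself. A parallel computation gives $(a+a_1+a'-a_1,\,\alpha(a_1)+\alpha(a'_1))$ on one side and $(a+\alpha(a_1)\cdot a',\,\alpha(a_1)+\alpha(a'_1))$ on the other. Equating first coordinates and cancelling $a$ yields $\alpha(a_1)\cdot a'=a_1+a'-a_1$, which after renaming the variables is (CM 2). Since each direction of the biconditional is obtained by reading these calculations forward or backward, both implications of the proposition are established simultaneously.

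The only real obstacle is book-keeping: the symbol $\rtimes$ denotes three different semi-direct products with three different actions (the conjugation action of $A$ on itself, the conjugation action of $B$ on itself, and the given action of $B$ on $A$), and it is easy to confuse these when expanding the operations. Once each operation is written out explicitly using the formula recalled just before Proposition~\ref{propxmod}, the equivalence of the two formulations reduces to a side-by-side comparison of two pairs of two-component expressions, with no further structural input needed.
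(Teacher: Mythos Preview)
The paper does not actually prove this proposition; it is stated with a citation to \cite{Por87} and no argument is given. Your direct unpacking of the two homomorphism conditions is correct and is exactly the natural verification: each semi-direct product operation is expanded, the map is applied on either side, and comparison of first coordinates isolates (CM~1) from $\alpha\times 1_B$ and (CM~2) from $1_A\times\alpha$, with the second coordinate matching automatically since $\alpha$ is already a group homomorphism. Your cautionary remark about keeping the three distinct actions straight is well placed and is the only real point of care in the computation.
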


\begin{ex}
	Following homomorphisms are standart examples of crossed modules.
	\begin{enumerate}[label={\textbf{(\roman{*})}}]
		\item Let $X$ be a topological space, $A\subset X$ and $x\in A$. Then the boundary map $\rho $ from the second relative homotopy group ${{\pi }_{2}}\left( X,A,x \right)$ to the fundamental group ${{\pi }_{1}}\left( X,x \right)$ is a crossed module.
		\item Let $G$ be a group and $N$ a normal subgroup of $G$. Then the inclusion function $N\xrightarrow{inc}G$ is a crossed module where the action of $G$ on $N$ is conjugation.
		\item Let $G$ be a group. Then the inner automorphism map $G\rightarrow \operatorname{Aut}(G)$ is a crossed module. Here the action is given by $\psi\cdot g=\psi(g)$ for all $\psi\in\operatorname{Aut}(G)$ and $g\in G$.
		\item Given any $G$-module, $M$, the trivial homomorphism $0:M\to G$ is a crossed $G$-module with the given action of $G$ on $M$.
	\end{enumerate}
\end{ex}

A morphism $f=\left\langle {{f}_{A}},{{f}_{B}} \right\rangle $ of crossed modules from $\left( A,B,\alpha  \right)$ to $\left( A',B',\alpha ' \right)$ is a pair of group homomorphisms ${{f}_{A}}:A\to A'$ and ${{f}_{B}}:B\to B'$ such that ${{f}_{B}}\alpha =\alpha '{{f}_{A}}$ and ${{f}_{A}}\left( b\cdot a \right)={{f}_{B}}\left( b \right)\cdot {{f}_{A}}\left( a \right)$ for all $a\in A$ and $b\in B$.
\[\xymatrix{B\times A \ar[r]^-{\cdot} \ar[d]_{f_B \times f_A} & A \ar[r]^{\alpha} \ar[d]_{f_A}  &  B \ar[d]^{f_B} \\ B'\times A' \ar[r]_-{\cdot} & A' \ar[r]_{\alpha'} & B' }\]

Crossed modules form a category with morphisms defined above. The category of crossed modules is denoted by $\XMod$.

\begin{defn}\cite{Nor87,Nor90}
	Let $\left( A,B,\alpha  \right)$ and $\left( S,T,\sigma  \right)$ be two crossed modules. Then $\left( S,T,\sigma  \right)$ is called a subcrossed module of $\left( A,B,\alpha  \right)$ if $S\le A$, $T\le B$, $\sigma $ is the restriction of $\alpha $ to $S$ and the action of $T$ on $S$ is the induced action from that of $B$ on $A$.
\end{defn}

\begin{defn}\cite{Nor87,Nor90}
	Let $\left( A,B,\alpha  \right)$ be a crossed module and $\left( S,T,\sigma  \right)$ a subcrossed module of $\left( A,B,\alpha  \right)$. Then $\left( S,T,\sigma  \right)$ is called a normal subcrossed module or an ideal of $\left( A,B,\alpha  \right)$ if
	\begin{enumerate}[label={\textbf{(\roman{*})}}]
		\item $T\triangleleft B$,
		\item $b\cdot s\in S$ for all $b\in B$, $s\in S$ and
		\item $t\cdot a-a\in S$ for all $t\in T$,$a\in A$.
	\end{enumerate}
\end{defn}

\begin{ex}
	Let $f=\left\langle {{f}_{A}},{{f}_{B}} \right\rangle :\left( A,B,\alpha  \right)\to \left( A',B',\alpha ' \right)$ be a morphism of crossed modules. Then the kernel $\ker f=\ker \left\langle {{f}_{A}},{{f}_{B}} \right\rangle =\left( \ker {{f}_{A}},\ker {{f}_{B}},{{\alpha }_{|\ker {{f}_{A}}}} \right)$ of $f=\left\langle {{f}_{A}},{{f}_{B}} \right\rangle $ is a normal subcrossed module (ideal) of $\left( A,B,\alpha  \right)$. Moreover, the image $\operatorname{Im}f=\operatorname{Im}\left\langle {{f}_{A}},{{f}_{B}} \right\rangle =\left( \operatorname{Im}{{f}_{A}},\operatorname{Im}{{f}_{B}},\alpha {{'}_{|\operatorname{Im}{{f}_{A}}}} \right)$ of $f=\left\langle {{f}_{A}},{{f}_{B}} \right\rangle $ is a subcrossed module of $\left( A',B',\alpha ' \right)$.
\end{ex}

\begin{defn}
	A topological crossed module $\left( A,B,\alpha  \right)$ consist of two topological groups $A$ and $B$, a continuous group homomorphism $\alpha :A\to B$ and a continuous action of $B$ on $A$ such that the conditions of Proposition \ref{propxmod} are satisfied.
\end{defn}

Now we give the pullback notion in the category of crossed modules.

\begin{defn}
	Let $\left( A,B,\alpha  \right)$, $\left( M,P,\mu  \right)$ and $\left( C,D,\gamma  \right)$ be three crossed modules and $f=\left\langle {{f}_{A}},{{f}_{B}} \right\rangle :\left( A,B,\alpha  \right)\to \left( M,P,\mu  \right)$ and $g=\left\langle {{g}_{C}},{{g}_{D}} \right\rangle :\left( C,D,\gamma  \right)\to \left( M,P,\mu  \right)$ be two crossed module morphisms. Then the pullback crossed module of $f$ and $g$ is $\left( A{}_{{{f}_{A}}}{{\times }_{{{g}_{C}}}}C,B{}_{{{f}_{B}}}{{\times }_{{{g}_{D}}}}D,\alpha \times \gamma  \right)$ where the action of $B{}_{{{f}_{B}}}{{\times }_{{{g}_{D}}}}D$ on $A{}_{{{f}_{A}}}{{\times }_{{{g}_{C}}}}C$ is given by
	\[\left( b,d \right)\cdot \left( a,c \right)=\left( b\cdot a,d\cdot c \right)\]
	for all $\left( b,d \right)\in B{}_{{{f}_{B}}}{{\times }_{{{g}_{D}}}}D$ and $\left( a,c \right)\in A{}_{{{f}_{A}}}{{\times }_{{{g}_{C}}}}C$.
\end{defn}

\subsection{Internal categories and Brown-Spencer Theorem}

\begin{defn}\label{intcat}
	Let $\mathbb{C}$ be a category with pullbacks. Then an internal category $C$ in $\mathbb{C}$ consist of two objects ${{C}_{1}}$ and ${{C}_{0}}$ in $\mathbb{C}$ and four structure morphisms $s,t:{{C}_{1}}\to {{C}_{0}}$, $\varepsilon :{{C}_{0}}\to {{C}_{1}}$ and $m:{{C}_{1}}\,{}_{s}{{\times }_{t}}\,{{C}_{1}}\to {{C}_{1}}$, where ${{C}_{1}}\,{}_{s}{{\times }_{t}}\,{{C}_{1}}$ is the pullback of $s$ and $t$, such that the following conditions hold:
	\begin{enumerate}[label={\textbf{(\roman{*})}}]
		\item\label{ic1} $s\varepsilon =t\varepsilon ={{1}_{{{C}_{0}}}}$;
		\item\label{ic2} $sm=s{{\pi }_{2}}$, $tm=t{{\pi }_{1}}$ ;
		\item\label{ic3} $m\left( {{1}_{{{C}_{1}}}}\times m \right)=m\left( m\times {{1}_{{{C}_{1}}}} \right)$ and
		\item\label{ic4} $m\left( \varepsilon s{{,1}_{{{C}_{1}}}} \right)=m\left( {{1}_{{{C}_{1}}}},\varepsilon t \right)={{1}_{{{C}_{1}}}}$.
	\end{enumerate}	
\end{defn}

Morphisms $s,t,\varepsilon $, and $m$ are called source, target, identity object maps and composition respectively. An internal category in $\mathbb{C}$ will be denoted by $C=\left( {{C}_{1}},{{C}_{0}},s,t,\varepsilon ,m \right)$ or only by $C$ for short.

If there is a morphism $n:{{C}_{1}}\to {{C}_{1}}$ in $\mathbb{C}$ such that
$m\left( 1,n \right)=\varepsilon s$  and  $m\left( n,1 \right)=\varepsilon t$, i.e. every morphism in $C_1$ has an inverse up to the composition, 
then we say that $C=\left( {{C}_{1}},{{C}_{0}},s,t,\varepsilon ,m,n \right)$ is an internal groupoid in $\mathbb{C}$.

Let $C$ and $C'$ be two internal categories in $\mathbb{C}$. Then a morphism $f=\left( {{f}_{1}},{{f}_{0}} \right)$ from $C$ to $C'$ consist of a pair of morphisms ${{f}_{1}}:{{C}_{1}}\to {{C}_{1}}'$ and ${{f}_{0}}:{{C}_{0}}\to {{C}_{0}}'$ in $\mathbb{C}$ such that
\begin{enumerate}[label={\textbf{(\roman{*})}}]
	\item $s{{f}_{1}}={{f}_{0}}s$, $t{{f}_{1}}={{f}_{0}}t$,
	\item $\varepsilon {{f}_{0}}={{f}_{1}}\varepsilon $ and
	\item $m\left( {{f}_{1}}\times {{f}_{1}} \right)={{f}_{1}}m$.
\end{enumerate}

Thus one can construct the category of internal categories in an arbitrary category $\mathbb{C}$ with morphisms defined above. This category is denoted by $\Cat\left( \mathbb{C} \right)$.

An internal category in the category of groups is called a group-groupoid \cite{BS76}. Group-groupoids are also the group objects in the category of small categories. 

\begin{ex}
	Let $X$ be a topological group. Then the set $\pi X$ of all homotopy classes of paths in $X$ defines a groupoid structure on the set of objects $X$. This groupoid is called the fundamental groupoid of $X$. Moreover, $\pi X$ is a group-groupoid \cite{BS76}. 
\end{ex}

Let $G$ be an internal category in the category of groups, i.e. a group-groupoid. Then the object of morphisms ${{G}_{1}}$ and object of objects ${{G}_{0}}$ have group structures and there are four group homomorphisms $s,t:{{G}_{1}}\to {{G}_{0}}$, $\varepsilon :{{G}_{0}}\to {{G}_{1}}$ and $m:{{G}_{1}}\,{}_{s}{{\times }_{t}}\,{{G}_{1}}\to {{G}_{1}}$ such that the conditions \ref{ic1}-\ref{ic4} of Definition \ref{intcat} are satisfied. 
%\begin{enumerate}[label={\textbf{(\roman{*})}}]
%	\item $s\varepsilon =t\varepsilon ={{1}_{{{G}_{0}}}}$,
%	\item $sm=s{{\pi }_{2}}$, $tm=t{{\pi }_{1}}$,
%	\item $m\left( {{1}_{{{G}_{1}}}}\times m \right)=m\left( m\times {{1}_{{{G}_{1}}}} \right)$ and
%	\item $m\left( \varepsilon s{{,1}_{{{G}_{1}}}} \right)=m\left( {{1}_{{{G}_{1}}}},\varepsilon t \right)={{1}_{{{G}_{1}}}}$.
%\end{enumerate}

Since $m:{{G}_{1}}\,{}_{s}{{\times }_{t}}\,{{G}_{1}}\to {{G}_{1}}$ is a group homomorphism then we can give the following lemma.

\begin{lem}\label{leminterchange}
	Let $G$ be an internal category in the category of groups. Then
	\[m\left( \left( b,a \right)+\left( b',a' \right) \right)=m\left( \left( b',a' \right) \right)+m\left( \left( b',a' \right) \right),\] i.e.
	\[\left( b+b' \right)\circ \left( a+a' \right)=\left( b\circ a \right)+\left( b'\circ a' \right)\] whenever one side (hence both sides) make senses, for all $a,a',b,b'\in {{G}_{1}}$.
\end{lem}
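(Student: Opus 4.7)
The plan is essentially to unpack what it means that $m$ is a group homomorphism, together with the observation that the pullback $G_1\,{}_s\!\times_t G_1$ inherits a group structure as a subgroup of the direct product $G_1\times G_1$.

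First I would verify that $G_1\,{}_s\!\times_t G_1$ is indeed a subgroup of $G_1\times G_1$ under coordinate-wise addition. If $(b,a)$ and $(b',a')$ lie in the pullback, so $s(b)=t(a)$ and $s(b')=t(a')$, then because $s,t:G_1\to G_0$ are group homomorphisms we obtain
\[ s(b+b')=s(b)+s(b')=t(a)+t(a')=t(a+a'), \]
so $(b+b',a+a')=(b,a)+(b',a')$ is again composable. This is precisely the meaning of the phrase ``whenever one side makes sense'' in the statement: the hypothesis that both $b\circ a$ and $b'\circ a'$ exist automatically guarantees that $(b+b')\circ(a+a')$ exists.

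Next, since $G$ is an internal category in $\Gp$, the composition $m:G_1\,{}_s\!\times_t G_1\to G_1$ is by hypothesis a morphism in $\Gp$, i.e.\ a group homomorphism with respect to the group structure identified in the previous paragraph. Applying this to the sum $(b,a)+(b',a')=(b+b',a+a')$ gives
\[ m\bigl((b,a)+(b',a')\bigr)=m(b,a)+m(b',a'), \]
which, rewritten in the groupoid notation $m(y,x)=y\circ x$, is exactly the interchange identity
\[ (b+b')\circ(a+a')=(b\circ a)+(b'\circ a'). \]
(I note in passing that the right-hand side of the displayed equation in the statement appears to contain a typographical slip, the term $m((b',a'))$ being written twice; the intended identity is the one above.)

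There is no real obstacle: the content of the lemma is that ``$m$ is a homomorphism'' $=$ ``interchange law'', and the only substantive point to check is that coordinate-wise addition preserves the composability condition, which follows immediately from $s$ and $t$ being homomorphisms. I would therefore present the proof as a two-line calculation preceded by the one-line verification that the pullback is a subgroup.
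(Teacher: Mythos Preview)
Your proposal is correct and matches the paper's approach exactly: the paper does not give a separate proof but simply prefaces the lemma with the sentence ``Since $m:G_1\,{}_s\times_t\,G_1\to G_1$ is a group homomorphism then we can give the following lemma,'' which is precisely the content of your argument. Your additional verification that the pullback is a subgroup of $G_1\times G_1$ (so that coordinate-wise addition preserves composability) and your observation about the typographical slip on the right-hand side are both accurate and helpful elaborations beyond what the paper provides.
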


Morphisms between group-groupoids are functors which are group homomorphisms. The category of group-groupoids is denoted by $\GpGd$.

Equation given in Lemma \ref{leminterchange} is called the interchange law. Applications of interchange law can be given as in the following.

Let $G$ be a group-groupoid. Then the partial composition in $G$ can be given in terms of group operations \cite{BS76}. Indeed, let $a\in G\left( x,y \right)$ and $b\in G\left( y,z \right)$. Then
\[\begin{array}{rl}
b\circ a=& \left( b+0 \right)\circ \left( {{1}_{y}}+\left( -{{1}_{y}}+a \right) \right) \\
& =\left( b\circ {{1}_{y}} \right)+\left( 0\circ \left( -{{1}_{y}}+a \right) \right) \\
& =b-{{1}_{y}}+a
\end{array}\]
and similarly $b\circ a=a-{{1}_{y}}+b$.

\begin{cor}\cite{BS76}
	Let $G$ be a group-groupoid. Then the elements of $\ker s$ and $\ker t$ are commute under the group operation.
\end{cor}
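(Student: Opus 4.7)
The plan is to read off the commutativity directly from the two explicit formulas for the groupoid composition that are displayed just above the statement, namely $b\circ a = b - \varepsilon(y) + a = a - \varepsilon(y) + b$ whenever $a\in G(x,y)$ and $b\in G(y,z)$. These formulas are themselves consequences of Lemma \ref{leminterchange} (the interchange law) together with the fact that $\varepsilon$ and the source/target maps are group homomorphisms, so they are available to use freely.

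First I would take $a\in\ker s$ and $b\in\ker t$, so that $s(a)=0$ and $t(b)=0$ in $G_0$. Because $\varepsilon:G_0\to G_1$ is a group homomorphism, $\varepsilon(0)=0$ in $G_1$. Next I would check that the composite $a\circ b$ is the one that is defined (rather than $b\circ a$, which in general is not): this requires $t(b)=s(a)$, and both sides equal $0$, so $a\circ b$ makes sense with intermediate object $y=0$.

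Applying the two displayed formulas to this composite, with $y=0$ and $\varepsilon(0)=0$, yields $a\circ b = a - \varepsilon(0) + b = a + b$ and simultaneously $a\circ b = b - \varepsilon(0) + a = b + a$. Equating the right-hand sides gives $a+b=b+a$, as required. The only subtlety is getting the direction of composition correct so that the intermediate object is $0$ and $\varepsilon(0)$ cancels; once this is arranged, the homomorphism property $\varepsilon(0)=0$ makes the cancellation trivial and the corollary follows in essentially one line.
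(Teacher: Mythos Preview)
Your proposal is correct and is precisely the argument the paper has in mind: the corollary is stated immediately after the two displayed formulas $b\circ a = b - 1_y + a = a - 1_y + b$, and its (implicit) proof consists of specializing these to the case $y=0$, exactly as you do. Your care about which composite is defined and the use of $\varepsilon(0)=0$ are the right details to make the one-line deduction rigorous.
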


Another consequence of the interchange law is that one can give the inverse of a morphism in terms of group operation. That is, let $a\in G\left( x,y \right)$. Then
\[{{1}_{y}}=a\circ {{a}^{-1}}=a-{{1}_{x}}+{{a}^{-1}}.\]
Thus ${{a}^{-1}}={{1}_{x}}-a+{{1}_{y}}$. Similarly ${{a}^{-1}}={{1}_{y}}-a+{{1}_{x}}$.
A final remark is that if $a,{{a}_{1}}\in \ker s$ and $t\left( a \right)=x$ then $-{{1}_{x}}+a\in \ker t$ so commutes with ${{a}_{1}}$. This implies that
$\left( -{{1}_{x}}+a \right)+{{a}_{1}}={{a}_{1}}+\left( -{{1}_{x}}+a \right)$
and thus
\[a+{{a}_{1}}-a={{1}_{x}}+{{a}_{1}}-{{1}_{x}}.\]
\begin{thm}[Brown \& Spencer Theorem, \cite{BS76}]
	The category $\GpGd$ of group-groupoids and the category $\XMod$ of crossed modules are equivalent.
\end{thm}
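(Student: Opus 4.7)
The plan is to construct quasi-inverse functors $\Phi\colon \GpGd \to \XMod$ and $\Psi\colon \XMod \to \GpGd$, leveraging the structural identities established just before the theorem.

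For $\Phi$, given a group-groupoid $G$, set $A = \ker s$, $B = G_0$, and $\alpha = t|_{\ker s}$, with $B$ acting on $A$ by $b\cdot a = \varepsilon(b) + a - \varepsilon(b)$. Since $s(\varepsilon(b) + a - \varepsilon(b)) = b + 0 - b = 0$, the action does land in $A$. Axiom (CM 1) follows from $t$ being a group homomorphism together with $t\varepsilon = 1_{G_0}$. For axiom (CM 2), taking $b = \alpha(a) = t(a)$ and using the identity $a + a_1 - a = 1_{t(a)} + a_1 - 1_{t(a)}$ derived in the remarks preceding the theorem yields exactly $\alpha(a)\cdot a_1 = a + a_1 - a$. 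On morphisms, a group-groupoid functor $(f_1, f_0)$ restricts to $(f_1|_{\ker s}, f_0)$, which is plainly a crossed module morphism.

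For $\Psi$, given a crossed module $(A, B, \alpha)$, put $G_0 = B$ and $G_1 = A \rtimes B$, with
\[s(a,b) = b,\quad t(a,b) = \alpha(a)+b,\quad \varepsilon(b) = (0,b),\]
composition $m((a_2,b_2),(a_1,b_1)) = (a_2+a_1, b_1)$ whenever $b_2 = \alpha(a_1)+b_1$, and inverse $n(a,b) = (-a, \alpha(a)+b)$. Associativity and the identity and inverse axioms are direct verifications. On morphisms, $(f_A, f_B)$ gives $f_1 = f_A \times f_B$ and $f_0 = f_B$, which is a group-groupoid functor by the equivariance of $f_A$. The principal verification in this direction, and the main obstacle of the whole proof, is that $m$ is a group homomorphism on the pullback $G_1\,{}_s\times_t\,G_1$; expanding both sides of the interchange law in the semidirect-product notation and reducing with axioms (CM 1) and (CM 2) is the one step that is not purely mechanical.

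It remains to exhibit natural isomorphisms $\Phi\Psi \cong 1_{\XMod}$ and $\Psi\Phi \cong 1_{\GpGd}$. The first is essentially the identity: in $\Psi(A,B,\alpha)$ one has $\ker s = A\times\{0\} \cong A$, and the restriction of $t$ together with the conjugation action recover $\alpha$ and the original $B$-action on $A$. For the second, the map
\[\ker s \rtimes G_0 \;\longrightarrow\; G_1,\qquad (a,b)\mapsto a+\varepsilon(b),\]
is a group isomorphism because $\varepsilon$ splits $s$ as in \S2.1; it commutes with source, target, and identity by inspection, and with composition via the formula $b\circ a = b - 1_y + a$ derived just above the theorem. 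Naturality in $G$ is then immediate, completing the equivalence.
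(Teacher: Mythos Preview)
Your proposal is correct and follows essentially the same approach as the paper: the paper's sketch defines the functor $\varphi$ via $A=\ker s$, $B=G_0$, $\alpha=t|_{\ker s}$ with action $x\cdot a=1_x+a-1_x$, and the inverse functor $\psi$ via the semidirect product $A\rtimes B$ with exactly the $s$, $t$, $\varepsilon$, and composition you give, then declares the remaining verifications ``straightforward'' and omits them. You have simply filled in those omitted details---the crossed module axioms, the fact that $m$ is a group homomorphism, and the explicit natural isomorphisms---so there is no substantive difference in strategy.
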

\begin{proof}
	We sketch the proof since we need some details in the last section. Define a functor
	$\varphi :\text{\GpGd}\to \text{\XMod}$
	as follows: Let $G$ be a group-groupoid. Then $\varphi \left( G \right)=\left( A,B,\alpha  \right)$ is a crossed modules where $A=\ker s$, $B={{G}_{0}}$, $\alpha $ is the restriction of $t$ and the action of $B$ on $A$ is given by $x\cdot a={{1}_{x}}+a-{{1}_{x}}$.
	
	Conversely, define a functor
	$\psi :\text{\XMod}\to \text{\GpGd}$
	as follows: Let $\left( A,B,\alpha  \right)$ be a crossed module. Then the semi-direct product group $A\rtimes B$ is a group-groupoid on $B$ where $s\left( a,b \right)=b$, $t\left( a,b \right)=\alpha \left( a \right)+b$, $\varepsilon \left( b \right)=\left( 0,b \right)$ and the composition is
	$\left( a',b' \right)\circ \left( a,b \right)=\left( a'+a,b \right)$
	where $b'=\alpha \left( a \right)+b$.
	
	Other details are straightforward so is omitted.
\end{proof}

\section{Internal categories within the category of crossed modules}

In this section we will characterize internal categories in the category $\XMod$. Let $C$ be an internal category in the category $\XMod$ of crossed modules over groups. Then $C$ consist of two crossed modules ${{C}_{1}}=\left( {{A}_{1}},{{B}_{1}},{{\alpha }_{1}} \right)$ and ${{C}_{0}}=\left( {{A}_{0}},{{B}_{0}},{{\alpha }_{0}} \right)$ and four crossed module morphisms as $s=\left\langle {{s}_{A}},{{s}_{B}} \right\rangle$, $t=\left\langle {{t}_{A}},{{t}_{B}} \right\rangle :{{C}_{1}}\to {{C}_{0}}$  which are called the source and the target maps respectively, $\varepsilon =\left\langle {{\varepsilon }_{A}},{{\varepsilon }_{B}} \right\rangle :{{C}_{0}}\to {{C}_{1}}$ which is called the identity object map and $m=\left\langle {{m}_{A}},{{m}_{B}} \right\rangle :{{C}_{1}}\,{}_{s}{{\times }_{t}}\,{{C}_{1}}\to {{C}_{1}}$ which is called the composition map. These are object to the followings:
\begin{enumerate}[label={\textbf{(\roman{*})}}]
	\item $s\varepsilon =t\varepsilon ={{1}_{{{C}_{0}}}}$;
	\item $sm=s{{\pi }_{2}}$, $tm=t{{\pi }_{1}}$;
	\item $m\left( {{1}_{{{C}_{1}}}}\times m \right)=m\left( m\times {{1}_{{{C}_{1}}}} \right)$ and
	\item $m\left( \varepsilon s,{{1}_{{{C}_{1}}}} \right)=m\left( {{1}_{{{C}_{1}}}},\varepsilon t \right)={{1}_{{{C}_{1}}}}$.
\end{enumerate}

\[\xymatrix@R=10mm@C=10mm{
	A_1 {_{s_{A}}\times_{t_{A}}}  A_1 \ar[r]^-{m_{A}} \ar[d]_-{\alpha_1 \times \alpha_1} & A_1 \ar[d]_-{\alpha_1} \ar@<.3ex>[r]^-{s_{A}} \ar@<-.3ex>[r]_-{t_{A}}  & A_0 \ar[d]^-{\alpha_0}  \ar@/_/@<-1ex>[l]_-{\varepsilon_A} \\
	B_1 {_{s_{B}}\times_{t_{B}}}  B_1 \ar[r]_-{m_{B}} & B_1  \ar@<.3ex>[r]^-{s_{B}} \ar@<-.3ex>[r]_-{t_{B}}  & B_0  \ar@/^/@<1ex>[l]^-{\varepsilon_B}	
}\]

An internal category in the category $\XMod$ will be denoted by $C=\left( {{C}_{1}},{{C}_{0}},s,t,\varepsilon ,m \right)$ or briefly by $C$ when no confusion arise. Identity objects ${{\varepsilon }_{A}}\left( {{a}_{0}} \right)$ and ${{\varepsilon }_{B}}\left( {{b}_{0}} \right)$ will be denoted by ${{1}_{{{a}_{0}}}}$ and ${{1}_{{{b}_{0}}}}$  for short, respectively. Also the composition of elements will be denoted by ${{m}_{A}}\left( {{a}_{1}},{{a}_{1}}' \right)={{a}_{1}}\circ {{a}_{1}}'$ and by
${{m}_{B}}\left( {{b}_{1}},{{b}_{1}}' \right)={{b}_{1}}\circ {{b}_{1}}'$ for ${{a}_{1}},{{a}_{1}}'\in {{A}_{1}}$ and ${{b}_{1}},{{b}_{1}}'\in {{B}_{1}}$ with ${{s}_{A}}\left( {{a}_{1}} \right)={{t}_{A}}\left( {{a}_{1}}' \right)$ and ${{s}_{B}}\left( {{b}_{1}} \right)={{t}_{B}}\left( {{b}_{1}}' \right)$.

\begin{ex}
	Let $(A,B,\alpha)$ be a crossed module over groups. We know that $(A\times A,B\times B,\alpha\times\alpha)$ is also a crossed module. If we set $C_1=(A\times A,B\times B,\alpha\times\alpha)$, $C_0=(A,B,\alpha)$, $s=\pi_1$, $t=\pi_2$, $\varepsilon=\Delta$ and define $m$ with $(a_1,a_2)\circ(a,a_1)=(a,a_2)$ and $(b_1,b_2)\circ(b,b_1)=(b,b_2)$ for all $a,a_1,a_2\in A$ and $b,b_1,b_2\in B$ then $C=(C_1,C_0,s,t,\varepsilon,m)$ becomes an internal category in $\XMod$.
\end{ex}

\begin{ex}
	Let $(A,B,\alpha)$ be a crossed module over groups. Then $C=((A,B,\alpha),(A,B,\alpha),s,t,\varepsilon,m)$ becomes an internal category in $\XMod$ where $s$, $t$ and $\varepsilon$ are identity maps.
\end{ex}

\begin{ex}
	Let $(A,B,\alpha)$ be a topological crossed module. Then $(\pi A,\pi B, \pi \alpha)$ is also a crossed module. Moreover, $\pi(A,B,\alpha)=((\pi A,\pi B, \pi \alpha),(A,B,\alpha),s,t,\varepsilon,m)$ is an internal category in $\XMod$.
\end{ex}

Now we will give the properties of an internal category with a few lemmas individually.

\begin{lem}
	Let $C$ be an internal category in $\XMod$. Then for $i\in \left\{ 0,1 \right\}$
	\begin{enumerate}[label={\textbf{(\roman{*})}}]
		\item $\alpha_{i}({{a}_{i}}+{{a}_{i}}')=\alpha_{i}({{a}_{i}})+\alpha_{i}({{a}_{i}}')$,
		\item ${{\alpha }_{i}}\left( {{b}_{i}}\cdot {{a}_{i}} \right)={{b}_{i}}+{{\alpha }_{i}}\left( {{a}_{i}} \right)-{{b}_{i}}$ and
		\item ${{\alpha }_{i}}\left( {{a}_{i}} \right)\cdot {{a}_{i}}'={{a}_{i}}+{{a}_{i}}'-{{a}_{i}}$
	\end{enumerate}	
	for all ${{a}_{i}},{{a}_{i}}'\in {{A}_{i}}$ and ${{b}_{i}}\in {{B}_{i}}$.
\end{lem}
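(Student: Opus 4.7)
The plan is to observe that this statement is essentially a tautology: it says that the two crossed modules $C_0$ and $C_1$ comprising the internal category $C$ satisfy the defining axioms of crossed modules. So the proof reduces to unpacking what it means to be an object in $\XMod$.

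First I would note that, by Definition \ref{intcat} applied to $\mathbb{C} = \XMod$, the objects of morphisms $C_1 = (A_1, B_1, \alpha_1)$ and of objects $C_0 = (A_0, B_0, \alpha_0)$ are both, by assumption, objects in the category $\XMod$, i.e., crossed modules. From this, part (i) is immediate: $\alpha_i$ is by definition a group homomorphism, so it preserves the group operation. Parts (ii) and (iii) are precisely the axioms (CM 1) and (CM 2) listed in Proposition \ref{propxmod}, applied to the crossed module $(A_i, B_i, \alpha_i)$ for $i \in \{0,1\}$.

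There is no real obstacle here; the lemma is a bookkeeping statement recording the crossed module structure of each component, which we will invoke implicitly in the subsequent analysis of the structure maps $s$, $t$, $\varepsilon$, $m$. The only thing to take care of is to make explicit that the assumption "$C$ is an internal category in $\XMod$" entails both that $C_0$ and $C_1$ are crossed modules and that the structure morphisms are crossed module morphisms — the first fact yields the present lemma, while the second will be exploited in subsequent lemmas to analyze compatibilities between the source, target, identity, and composition morphisms with the crossed module data.
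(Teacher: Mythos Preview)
Your proposal is correct and matches the paper's own proof, which is the single sentence: ``It follows from the fact that $C_i = (A_i, B_i, \alpha_i)$ is a crossed module for $i \in \{0,1\}$.'' You have simply unpacked this observation a bit more explicitly by pointing to Proposition~\ref{propxmod} for (ii)--(iii) and to $\alpha_i$ being a homomorphism for (i), which is exactly the intended reasoning.
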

\begin{proof}
	It follows from the fact that ${{C}_{i}}=\left( {{A}_{i}},{{B}_{i}},{{\alpha }_{i}} \right)$ is a crossed module for $i\in \left\{ 0,1 \right\}$.
\end{proof}

\begin{lem}\label{catxmodproperties}
	Let $C$ be an internal category in $\XMod$. Then
	\begin{enumerate}[label={\textbf{(\roman{*})}}]
		\item\label{1} ${{s}_{A}}\left( {{a}_{1}}+{{a}_{1}}' \right)={{s}_{A}}\left( {{a}_{1}} \right)+{{s}_{A}}\left( {{a}_{1}}' \right)$, ${{s}_{B}}\left( {{b}_{1}}+{{b}_{1}}' \right)={{s}_{B}}\left( {{b}_{1}} \right)+{{s}_{B}}\left( {{b}_{1}}' \right)$,
		\\ ${{t}_{A}}\left( {{a}_{1}}+{{a}_{1}}' \right)={{t}_{A}}\left( {{a}_{1}} \right)+{{t}_{A}}\left( {{a}_{1}}' \right)$, ${{t}_{B}}\left( {{b}_{1}}+{{b}_{1}}' \right)={{t}_{B}}\left( {{b}_{1}} \right)+{{t}_{B}}\left( {{b}_{1}}' \right)$,
		\item\label{2} ${{\alpha }_{0}}{{s}_{A}}={{s}_{B}}{{\alpha }_{1}}$, ${{\alpha }_{0}}{{t}_{A}}={{t}_{B}}{{\alpha }_{1}}$,
		\item\label{3} ${{s}_{A}}\left( {{b}_{1}}\cdot {{a}_{1}} \right)={{s}_{B}}\left( {{b}_{1}} \right)\cdot {{s}_{A}}\left( {{a}_{1}} \right)$, ${{t}_{A}}\left( {{b}_{1}}\cdot {{a}_{1}} \right)={{t}_{B}}\left( {{b}_{1}} \right)\cdot {{t}_{A}}\left( {{a}_{1}} \right)$,
		\item\label{4} ${{\varepsilon }_{A}}\left( {{a}_{0}}+{{a}_{0}}' \right)={{\varepsilon }_{A}}\left( {{a}_{0}} \right)+{{\varepsilon }_{A}}\left( {{a}_{0}}' \right)$,${{\varepsilon }_{B}}\left( {{b}_{0}}+{{b}_{0}}' \right)={{\varepsilon }_{B}}\left( {{b}_{0}} \right)+{{\varepsilon }_{B}}\left( {{b}_{0}}' \right)$,
		\item\label{5} ${{\alpha }_{1}}{{\varepsilon }_{A}}={{\varepsilon }_{B}}{{\alpha }_{0}}$,
		\item\label{6} ${{\varepsilon }_{A}}\left( {{b}_{0}}\cdot {{a}_{0}} \right)={{\varepsilon }_{B}}\left( {{b}_{0}} \right)\cdot {{\varepsilon }_{A}}\left( {{a}_{0}} \right)$,
		\item\label{7} $\left( {{a}_{1}}+{{a}_{1}'} \right)\circ \left( {{a}_{1}''}+{{a}_{1}'''} \right)=\left( {{a}_{1}}\circ {{a}_{1}''} \right)+\left( {{a}_{1}}'\circ {{a}_{1}}''' \right)$ with ${{s}_{A}}\left( {{a}_{1}} \right)={{t}_{A}}\left( {{a}_{1}}'' \right)$ and ${{s}_{A}}\left( {{a}_{1}}' \right)={{t}_{A}}\left( {{a}_{1}}''' \right)$,\\ $\left( {{b}_{1}}+{{b}_{1}}' \right)\circ \left( {{b}_{1}}''+{{b}_{1}}''' \right)=\left( {{b}_{1}}\circ {{b}_{1}}'' \right)\circ \left( {{b}_{1}}'\circ {{b}_{1}}''' \right)$ with
		${{s}_{B}}\left( {{b}_{1}} \right)={{t}_{B}}\left( {{b}_{1}}'' \right)$ and ${{s}_{B}}\left( {{b}_{1}}' \right)={{t}_{B}}\left( {{b}_{1}}''' \right)$,
		\item\label{8} ${{\alpha }_{1}}{{m}_{A}}={{m}_{B}}\left( {{\alpha }_{1}}\times {{\alpha }_{1}} \right)$,
		\item\label{9} $\left( {{b}_{1}}\circ {{b}_{1}}' \right)\cdot \left( {{a}_{1}}\circ {{a}_{1}}' \right)=\left( {{b}_{1}}\cdot {{a}_{1}} \right)\circ \left( {{b}_{1}}'\cdot {{a}_{1}}' \right)$
	\end{enumerate}	
	for all ${{a}_{1}},{{a}_{1}'}\text{,}{{a}_{1}''},{{a}_{1}'''}\in {{A}_{1}}, {{b}_{1}},{{b}_{1}'}\text{,}{{b}_{1}''},{{b}_{1}'''}\in {{B}_{1}}$, ${{a}_{0}},{{a}_{0}}'\in {{A}_{0}}$, and ${{b}_{0}},{{b}_{0}}'\in {{B}_{0}}$.
\end{lem}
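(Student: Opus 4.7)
The plan is to observe that the entire lemma is a direct translation of the data that $s$, $t$, $\varepsilon$, and $m$ are morphisms in $\XMod$, once the group structure and action on the pullback crossed module $C_1 \,_s\!\times_t C_1$ have been made explicit. Accordingly, I would organise the proof into three blocks corresponding to these four structural morphisms, and in each block I would simply unpack what it means for the relevant morphism to be a crossed module morphism by invoking the diagram
\[\xymatrix{B\times A \ar[r]^-{\cdot} \ar[d]_{f_B\times f_A} & A \ar[r]^{\alpha}\ar[d]_{f_A} & B \ar[d]^{f_B} \\ B'\times A'\ar[r]_-{\cdot} & A'\ar[r]_{\alpha'} & B' }\]
from the definition of a morphism of crossed modules.

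More precisely, items \ref{1}, \ref{2} and \ref{3} follow by writing out the three conditions on the morphism $s=\langle s_A,s_B\rangle$ (respectively $t=\langle t_A,t_B\rangle$) from $C_1$ to $C_0$: the homomorphism property of $s_A$ and $s_B$ gives \ref{1}; the square $\alpha_0 s_A = s_B\alpha_1$ gives \ref{2}; and equivariance gives \ref{3}. Items \ref{4}, \ref{5} and \ref{6} are the analogous unpacking of the morphism $\varepsilon=\langle\varepsilon_A,\varepsilon_B\rangle:C_0\to C_1$. Hence the first six items require no calculation beyond reciting the definition.

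For items \ref{7}, \ref{8} and \ref{9} the point is that $m=\langle m_A,m_B\rangle$ is a morphism in $\XMod$ whose source is the \emph{pullback crossed module} $C_1 \,_s\!\times_t C_1$. Here I would invoke the explicit description of that pullback from the earlier definition: the group operation is componentwise, namely $(a_1,a_1')+(a_1'',a_1''')=(a_1+a_1'',a_1'+a_1''')$ on the $A$-part (and similarly on the $B$-part), and the action is $(b_1,b_1')\cdot(a_1,a_1')=(b_1\cdot a_1,\,b_1'\cdot a_1')$. With this in hand, the fact that $m_A$ and $m_B$ are group homomorphisms on the pullback is literally the interchange law stated in \ref{7} (this is the crossed-module analogue of Lemma \ref{leminterchange}); the naturality square $\alpha_1 m_A = m_B(\alpha_1\times\alpha_1)$ is exactly \ref{8}; and the equivariance of $m$ with respect to the pullback action is precisely \ref{9}.

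The only step requiring any genuine care is the identification of the pullback's group operation and action in the $A$- and $B$-layers of the crossed module, since if one worked with the wrong model of the pullback the interchange law \ref{7} would look mysterious. Once the componentwise structure is fixed, every item in the lemma is a one-line consequence of the definition of an arrow in $\XMod$, so I would present the proof as a short enumeration pointing at the relevant axiom for each item rather than carrying out nine separate verifications.
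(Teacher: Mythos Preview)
Your proposal is correct and follows essentially the same approach as the paper: the paper's proof likewise groups (i)--(iii), (iv)--(vi), and (vii)--(ix) as direct consequences of $s,t$, $\varepsilon$, and $m$ respectively being morphisms of crossed modules. Your additional remark making the componentwise group operation and action on the pullback $C_1\,{}_s\!\times_t C_1$ explicit is a helpful clarification that the paper leaves implicit, but otherwise the arguments coincide.
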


\begin{proof}
	\ref{1}-\ref{3} follows from the fact that $s=\left\langle {{s}_{A}},{{s}_{B}} \right\rangle $, $t=\left\langle {{t}_{A}},{{t}_{B}} \right\rangle $ being morphisms of crossed modules.\\
	\ref{4}-\ref{6} follows from the fact that $\varepsilon =\left\langle {{\varepsilon }_{A}},{{\varepsilon }_{B}} \right\rangle $ being a morphism of crossed modules. In these conditions if we use the symbol $\varepsilon \left( * \right)={{1}_{*}}$ for identity morphisms then we get
	\begin{enumerate}
		\item[\textit{\textbf{(iv)'}}] ${{1}_{{{a}_{0}}+{{a}_{0}}'}}={{1}_{{{a}_{0}}}}+{{1}_{{{a}_{0}}'}}$, ${{1}_{{{b}_{0}}+{{b}_{0}}'}}={{1}_{{{b}_{0}}}}+{{1}_{{{b}_{0}}'}}$,
		\item[\textit{\textbf{(v)'}}] ${{\alpha }_{1}}\left( {{1}_{{{a}_{0}}}} \right)={{1}_{{{\alpha }_{0}}\left( {{a}_{0}} \right)}}$  and
		\item[\textit{\textbf{(vi)'}}] ${{1}_{{{b}_{0}}\cdot {{a}_{0}}}}={{1}_{{{b}_{0}}}}\cdot {{1}_{{{a}_{0}}}}$.
	\end{enumerate}		
	\ref{7}-\ref{9} follows from the fact that $m=\left\langle {{m}_{A}},{{m}_{B}} \right\rangle $ being a morphism of crossed modules.
\end{proof}

The identities given in condition \ref{7} of Lemma \ref{catxmodproperties} are called interchange laws between group operations and compositions. As an application of interchange laws we will give the following corollary.

\begin{cor}
	Let $C$ be an internal category in $\XMod$. Then the compositions in
	${{A}_{1}}$ and ${{B}_{1}}$ can be written in terms of group operations on ${{A}_{1}}$ and ${{B}_{1}}$, respectively, as
	\[{{a}_{1}}\circ {{a}_{1}}'={{a}_{1}}-{{1}_{{{s}_{A}}\left( {{a}_{1}} \right)}}+{{a}_{1}}'={{a}_{1}}'-{{1}_{{{s}_{A}}\left( {{a}_{1}} \right)}}+{{a}_{1}}\]
	and
	\[{{b}_{1}}\circ {{b}_{1}}'={{b}_{1}}-{{1}_{{{s}_{B}}\left( {{b}_{1}} \right)}}+{{b}_{1}}'={{b}_{1}}'-{{1}_{{{s}_{B}}\left( {{b}_{1}} \right)}}+{{b}_{1}}\]
	for ${{a}_{1}},{{a}_{1}}'\in A$, ${{b}_{1}},{{b}_{1}}'\in B$ with ${{s}_{A}}\left( {{a}_{1}} \right)={{t}_{A}}\left( {{a}_{1}}' \right)$ and ${{s}_{B}}\left( {{b}_{1}} \right)={{t}_{B}}\left( {{b}_{1}}' \right)$.
\end{cor}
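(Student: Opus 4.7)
The plan is to mimic directly the Brown--Spencer computation sketched earlier in the excerpt, but now inside $A_1$ (and analogously $B_1$), using the interchange law of Lemma~\ref{catxmodproperties}\ref{7} in place of the interchange law for group-groupoids. Write $x = s_A(a_1) = t_A(a_1')$ and decompose
\[
a_1 = a_1 + 0, \qquad a_1' = 1_x + (-1_x + a_1'),
\]
so that the composable pair $(a_1, a_1')$ is presented as the coordinate-wise sum of the pairs $(a_1, 1_x)$ and $(0,\, -1_x + a_1')$. Before invoking the interchange law, I would verify the composability of each summand: $s_A(a_1) = x = t_A(1_x)$ from the identity law $t\varepsilon = 1_{C_0}$, and $s_A(0) = 0 = -x + x = t_A(-1_x) + t_A(a_1') = t_A(-1_x + a_1')$, using that $s_A, t_A$ are group homomorphisms (Lemma~\ref{catxmodproperties}\ref{1}).

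Next I would apply Lemma~\ref{catxmodproperties}\ref{7} to obtain
\[
a_1 \circ a_1' = (a_1 + 0) \circ \bigl(1_x + (-1_x + a_1')\bigr) = (a_1 \circ 1_x) + \bigl(0 \circ (-1_x + a_1')\bigr).
\]
The first term simplifies via the identity axiom \ref{ic4} to $a_1 \circ 1_x = a_1$. For the second term I note that $\varepsilon_A$ is a group homomorphism (Lemma~\ref{catxmodproperties}\ref{4}), so $\varepsilon_A(0) = 0 = 1_0$ in $A_1$; hence $0$ is itself an identity morphism (over the zero object of $A_0$) and $0 \circ (-1_x + a_1') = -1_x + a_1'$. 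Combining the two yields $a_1 \circ a_1' = a_1 - 1_x + a_1'$. For the second equality $a_1 \circ a_1' = a_1' - 1_x + a_1$, I would repeat the argument with the mirror decomposition $a_1 = 0 + a_1$ and $a_1' = (a_1' - 1_x) + 1_x$, checking composability the same way.

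The identical argument, using the $B$-components of Lemma~\ref{catxmodproperties}\ref{1}, \ref{4}, \ref{7}, handles the $B_1$ statement verbatim. I do not anticipate a real obstacle; the only point that requires attention is the observation that $0 \in A_1$ serves as the identity morphism at $0 \in A_0$ (and likewise in $B$), which is exactly the content of the homomorphism property of $\varepsilon_A$ (and $\varepsilon_B$). Everything else is mechanical once the interchange law is in hand.
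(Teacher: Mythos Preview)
Your proposal is correct and follows exactly the same route as the paper's proof: the identical decompositions $a_1 = a_1 + 0$, $a_1' = 1_x + (-1_x + a_1')$ (and their mirrors) followed by the interchange law of Lemma~\ref{catxmodproperties}\ref{7}. If anything, you are more careful than the paper in explicitly checking composability of the summands and in recording that $0 = \varepsilon_A(0) = 1_0$; the paper leaves these points implicit.
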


\begin{proof}
	We will prove the assumption for ${{A}_{1}}$. If $0$ denotes the identity (zero) elements of groups ${{A}_{1}}$ and ${{A}_{0}}$ then
	\[\begin{array}{rl}
	{{a}_{1}}\circ {{a}_{1}}' & =\left( {{a}_{1}}+0 \right)\circ \left( {{1}_{{{s}_{A}}\left( {{a}_{1}} \right)}}+\left( -{{1}_{{{s}_{A}}\left( {{a}_{1}} \right)}}+{{a}_{1}}' \right) \right) \\
	& =\left( {{a}_{1}}\circ {{1}_{{{s}_{A}}\left( {{a}_{1}} \right)}} \right)+\left( 0\circ \left( -{{1}_{{{s}_{A}}\left( {{a}_{1}} \right)}}+{{a}_{1}}' \right) \right) \\
	& ={{a}_{1}}-{{1}_{{{s}_{A}}\left( {{a}_{1}} \right)}}+{{a}_{1}}'
	\end{array}\]
	and similarly
	\[\begin{array}{rl}
	{{a}_{1}}\circ {{a}_{1}}' & =\left( 0+{{a}_{1}} \right)\circ \left( \left( {{a}_{1}}'-{{1}_{{{s}_{A}}\left( {{a}_{1}} \right)}} \right)+{{1}_{{{s}_{A}}\left( {{a}_{1}} \right)}} \right) \\
	& =\left( 0\circ \left( {{a}_{1}}'-{{1}_{{{s}_{A}}\left( {{a}_{1}} \right)}} \right) \right)+\left( {{a}_{1}}\circ {{1}_{{{s}_{A}}\left( {{a}_{1}} \right)}} \right) \\
	& ={{a}_{1}}'-{{1}_{{{s}_{A}}\left( {{a}_{1}} \right)}}+{{a}_{1}}\,.
	\end{array}\]
\end{proof} 

By this corollary we obtain that if ${{s}_{A}}\left( {{a}_{1}} \right)={{t}_{A}}\left( {{a}_{1}}' \right)=0$, i.e. ${{a}_{1}}\in \ker {{s}_{A}}$  and ${{a}_{1}}'\in \ker {{t}_{A}}$, then
\[{{a}_{1}}+{{a}_{1}}'={{a}_{1}}'+{{a}_{1}}.\]

So the elements of $\ker {{s}_{A}}$ and $\ker {{t}_{A}}$ are commutative. Similarly, the elements of $\ker {{s}_{B}}$ and $\ker {{t}_{B}}$ are commutative too. Moreover, for an element ${{a}_{1}}\in {{A}_{1}}$, ${{a}_{1}}^{-1}={{1}_{{{s}_{A}}\left( {{a}_{1}} \right)}}-{{a}_{1}}+{{1}_{{{t}_{A}}\left( {{a}_{1}} \right)}}\in {{A}_{1}}$ is the inverse element of ${{a}_{1}}$ up to the composition ${{m}_{A}}$. Similarly for an element ${{b}_{1}}\in {{B}_{1}}$, ${{b}_{1}}^{-1}={{1}_{{{s}_{B}}\left( {{b}_{1}} \right)}}-{{b}_{1}}+{{1}_{{{t}_{B}}\left( {{b}_{1}} \right)}}\in {{B}_{1}}$ is the inverse element of ${{b}_{1}}$ up to the composition ${{m}_{B}}$. This means that $C=\left( {{C}_{1}},{{C}_{0}},s,t,\varepsilon ,m,n \right)$ has a groupoid structure where $n=\left\langle {{n}_{A}},{{n}_{B}} \right\rangle :{{C}_{1}}\to {{C}_{1}}$ is a morphism of crossed modules with
\[\begin{array}{*{35}{l}}
{{n}_{A}} & : & {{A}_{1}} & \to  & {{A}_{1}}  \\
{} & {} & {{a}_{1}} & \mapsto  & {{n}_{A}}\left( {{a}_{1}} \right)=a_{1}^{-1}={{1}_{{{s}_{A}}\left( {{a}_{1}} \right)}}-{{a}_{1}}+{{1}_{{{t}_{A}}\left( {{a}_{1}} \right)}}  \\
\end{array}\]
and
\[\begin{array}{*{35}{l}}
{{n}_{B}} & : & {{B}_{1}} & \to  & {{B}_{1}}  \\
{} & {} & {{b}_{1}} & \mapsto  & {{n}_{B}}\left( {{b}_{1}} \right)=b_{1}^{-1}={{1}_{{{s}_{B}}\left( {{b}_{1}} \right)}}-{{b}_{1}}+{{1}_{{{t}_{B}}\left( {{b}_{1}} \right)}}\,.  \\
\end{array}\]

It is easy to see that ${{1}_{{{s}_{A}}\left( {{a}_{1}} \right)}}-{{a}_{1}}+{{1}_{{{t}_{A}}\left( {{a}_{1}} \right)}}={{1}_{{{t}_{A}}\left( {{a}_{1}} \right)}}-{{a}_{1}}+{{1}_{{{s}_{A}}\left( {{a}_{1}} \right)}}$ for all ${{a}_{1}}\in {{A}_{1}}$ and similarly ${{1}_{{{s}_{B}}\left( {{b}_{1}} \right)}}-{{b}_{1}}+{{1}_{{{t}_{B}}\left( {{b}_{1}} \right)}}={{1}_{{{t}_{B}}\left( {{b}_{1}} \right)}}-{{b}_{1}}+{{1}_{{{s}_{B}}\left( {{b}_{1}} \right)}}$ for all ${{b}_{1}}\in {{B}_{1}}$.

\begin{lem}
	Let ${{a}_{1}}\in {{A}_{1}}$ and ${{b}_{1}}\in {{B}_{1}}$. Then $b_{1}^{-1}\cdot a_{1}^{-1}={{\left( {{b}_{1}}\cdot {{a}_{1}} \right)}^{-1}}$.
\end{lem}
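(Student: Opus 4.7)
The plan is to recognize the identity as an instance of the uniqueness of inverses in the groupoid underlying $C_1$, and to reduce its verification to a single application of the interchange law between the action and the composition. Concretely, if I can show that $b_1^{-1}\cdot a_1^{-1}$ composes with $b_1\cdot a_1$ on both sides to give the expected identity morphisms, then by uniqueness of groupoid inverses this element must coincide with $(b_1\cdot a_1)^{-1}$.

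To carry this out, I would first check that the required compositions are defined. By Lemma \ref{catxmodproperties}\ref{3} one has $s_A(b_1\cdot a_1)=s_B(b_1)\cdot s_A(a_1)$ and $t_A(b_1\cdot a_1)=t_B(b_1)\cdot t_A(a_1)$, and from the explicit formula $a_1^{-1}=1_{s_A(a_1)}-a_1+1_{t_A(a_1)}$ (and the analogue for $b_1^{-1}$) one reads off $t_B(b_1^{-1})=s_B(b_1)$ and $t_A(a_1^{-1})=s_A(a_1)$. Hence both $(b_1\cdot a_1)\circ(b_1^{-1}\cdot a_1^{-1})$ and $(b_1^{-1}\cdot a_1^{-1})\circ(b_1\cdot a_1)$ are meaningful. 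Applying the interchange law of Lemma \ref{catxmodproperties}\ref{9} with $b_1'=b_1^{-1}$ and $a_1'=a_1^{-1}$ gives
\[(b_1\cdot a_1)\circ(b_1^{-1}\cdot a_1^{-1})=(b_1\circ b_1^{-1})\cdot(a_1\circ a_1^{-1})=1_{t_B(b_1)}\cdot 1_{t_A(a_1)},\]
and Lemma \ref{catxmodproperties}\ref{6} (that $\varepsilon$ respects the action) rewrites the right-hand side as $1_{t_B(b_1)\cdot t_A(a_1)}=1_{t_A(b_1\cdot a_1)}$. A symmetric computation, starting from the other interchange and using $b_1^{-1}\circ b_1=1_{s_B(b_1)}$ and $a_1^{-1}\circ a_1=1_{s_A(a_1)}$, yields $(b_1^{-1}\cdot a_1^{-1})\circ(b_1\cdot a_1)=1_{s_A(b_1\cdot a_1)}$, whereupon uniqueness of groupoid inverses closes the argument.

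The computation is essentially formal, so the only real obstacle is keeping the source/target bookkeeping consistent across both components of the crossed-module morphisms; this is arranged automatically by the compatibility conditions on $s$, $t$, and $\varepsilon$ from Lemma \ref{catxmodproperties}. The underlying conceptual point is simply that the action $B_1\times A_1\to A_1$ interacts coherently with both the group structure and the groupoid composition via the interchange law, so it also commutes with the operation of taking groupoid inverses.
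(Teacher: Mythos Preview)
Your proof is correct and follows essentially the same route as the paper: apply the interchange law of Lemma~\ref{catxmodproperties}\ref{9} to rewrite $(b_1\cdot a_1)\circ(b_1^{-1}\cdot a_1^{-1})$ as $(b_1\circ b_1^{-1})\cdot(a_1\circ a_1^{-1})$, use Lemma~\ref{catxmodproperties}\ref{6} and \ref{3} to identify the result as the appropriate identity morphism, do the symmetric computation, and conclude by uniqueness of groupoid inverses. The only cosmetic difference is that the paper's computation lands at $1_{s_A(b_1\cdot a_1)}$ and $1_{t_A(b_1\cdot a_1)}$ in the opposite order from yours, reflecting a harmless source/target convention; your extra sentence checking that the compositions are actually defined is a welcome addition.
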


\begin{proof}
	By the condition \ref{9} of Lemma \ref{catxmodproperties}
	\[\begin{array}{rl}
	\left( {{b}_{1}}\cdot {{a}_{1}} \right)\circ \left( b_{1}^{-1}\cdot a_{1}^{-1} \right)& =\left( {{b}_{1}}\circ b_{1}^{-1} \right)\cdot \left( {{a}_{1}}\circ a_{1}^{-1} \right) \\
	& ={{1}_{s_B({{b}_{1}})}}\cdot {{1}_{s_A({{a}_{1}})}} \\
	& ={{1}_{s_B({{b}_{1}})\cdot s_A({{a}_{1}})}} \\
	& ={{1}_{s_A({{b}_{1}}\cdot {{a}_{1}})}}
	\end{array}\]
	and similarly $\left(b_{1}^{-1}\cdot a_{1}^{-1} \right)\circ \left( {{b}_{1}}\cdot {{a}_{1}} \right)={{1}_{t_A({{b}_{1}}\cdot {{a}_{1}})}}$. Thus $b_{1}^{-1}\cdot a_{1}^{-1}={{\left( {{b}_{1}}\cdot {{a}_{1}} \right)}^{-1}}$.
\end{proof}

It is easy to see that an internal category in the category of crossed modules over groups is indeed a crossed module object in the category of internal categories within groups.

\begin{defn}
	Let $C$ and $C'$ be two internal categories in $\XMod$. A morphism (internal functor) from $C$ to $C'$ is a pair of crossed module morphisms
	$f=\left( {{f}_{1}}=\left\langle f_{1}^{A},f_{1}^{B} \right\rangle ,{{f}_{0}}=\left\langle f_{0}^{A},f_{0}^{B} \right\rangle  \right):C\to C'$
	such that ${{f}_{0}}s=s{{f}_{1}}$, ${{f}_{0}}t=t{{f}_{1}}$, ${{f}_{1}}\varepsilon =\varepsilon {{f}_{0}}$ and ${{f}_{1}}m=m\left( {{f}_{1}}\times {{f}_{1}} \right)$.
\end{defn}

Hence we can construct the category of internal categories (groupoids) within the category of crossed modules over groups where the morphisms are internal functors as defined above. This category will be denoted by $\Cat(\XMod)$.

\subsection{Crossed squares}

Crossed squares are first defined in \cite{GWLod81}. In this subsection we recall the definition of a crossed square as given in \cite{BL87}. Further we prove that the category of crossed squares and of internal categories within the crossed modules are equivalent. Finally we give some examples of crossed squares using this equivalence.

\begin{defn}\cite{BL87}\label{xsq}
	A crossed square over groups consists of four morphisms of groups $\lambda :L\to M$, ${\lambda }':L\to N$, $\mu :M\to P$ and $\nu :N\to P$, such that $\nu {\lambda }'=\mu \lambda $ together with actions of the group $P$ on $L$, $M$, $N$ on the left, conventionally, (and hence actions of $M$ on $L$ and $N$ via $\mu $ and of $N$ on $L$ and $M$ via $\nu $) and a function $h:M\times N\to L$. These are subject to the following axioms:
	\begin{enumerate}[label={\textbf{(\roman{*})}}]
		\item $\lambda $, ${\lambda }'$ are $P$-equivariant and $\mu $, $\nu $ and $\kappa =\mu \lambda $ are crossed modules,
		\item $\lambda h(m,n)=m+n\cdot (-m)$, ${\lambda }'h(m,n)=m\cdot n-n$,
		\item $h(\lambda (l),n)=l+n\cdot (-l)$, $h(m,{\lambda }'(l))=m\cdot l-l$,
		\item $h(m+{m}',n)=m\cdot h({m}',n)+h(m,n)$, $h(m,n+{n}')=h(m,n)+n\cdot h(m,{n}')$,
		\item $h(p\cdot m,p\cdot n)=p\cdot h(m,n)$
	\end{enumerate}
	for all $l\in L$, $m,{m}'\in M$, $n,{n}'\in N$ and $p\in P$.
\end{defn}
\[\xymatrix{
	L \ar[r]^\lambda \ar[d]_{\lambda'}  & M \ar[d]^{\mu} \\
	N  \ar[r]_{\nu}  & P}\]
A crossed square will be denoted by $S=\left( L,M,N,P \right)$.

\begin{ex}\cite{Nor87,Nor90}
	Let $(A,B,\alpha )$ be crossed module and $(S,T,\sigma )$ a normal subcrossed module of $(A,B,\alpha )$. Then
	\[\xymatrix{
		S \ar@{^{(}->}[d]_{\operatorname{inc}} \ar[r]^{\sigma}  & T \ar@{^{(}->}[d]^{\operatorname{inc}}  \\
		A  \ar[r]_{\partial}  & B}\]
	forms a crossed square of groups where the action of $B$ on $S$ is induced action from the action of $B$ on $A$ and the action of $B$ on $T$ is conjugation. The h map is defined by $h(t,a)=t\cdot a - a$	for all $t\in T$ and $a\in A$.
\end{ex}
A topological example of crossed squares is the fundamental crossed square which is defined in \cite{BL87} as follows: Suppose given a commutative square of spaces
\[\xymatrix{
	C \ar[r]^f \ar[d]_{g}  & A \ar[d]^{a} \\
	B  \ar[r]_{b}  & X}\]
Let $F(f)$ be the homotopy fibre of $f$ and $F(X)$ the homotopy fibre of $F(g)\rightarrow F(a)$. Then the commutative square of groups
\[\xymatrix{
	\pi_1F(\bm X) \ar[r] \ar[d]  & \pi_1F(g) \ar[d] \\ \pi_1F(f)  \ar[r]  & \pi_1(C)}\]
is naturally equipped with a structure of crossed square. This crossed square is called the fundamental crossed square \cite{BL87}.

A morphism $f=\left( {{f}_{L}},{{f}_{M}},{{f}_{N}},{{f}_{P}} \right)$ of crossed squares from ${{S}_{1}}=\left( {{L}_{1}},{{M}_{1}},{{N}_{1}},{{P}_{1}} \right)$ to ${{S}_{2}}=\left( {{L}_{2}},{{M}_{2}},{{N}_{2}},{{P}_{2}} \right)$ consist of four group homomorphisms ${{f}_{L}}:{{L}_{1}}\to {{L}_{2}}$, ${{f}_{M}}:{{M}_{1}}\to {{M}_{2}}$, ${{f}_{N}}:{{N}_{1}}\to {{N}_{2}}$ and ${{f}_{P}}:{{P}_{1}}\to {{P}_{2}}$ which are compatible with the actions and the functions ${{h}_{1}}$ and ${{h}_{2}}$.

\[\xymatrix@R=4mm@C=7mm{
	& M_1 \ar@{..>}'[d][dd]_{\mu_1} \ar[rr]^{f_M} & & M_2 \ar[dd]^{\mu_2}  \\
	L_1 \ar[dd]_{\lambda'_1} \ar[rr]^(.7){f_L} \ar[ur]^{\lambda_1} & & L_2 \ar[dd]_(.3){\lambda'_2} \ar[ur]_{\lambda_2} \\
	& P_1 \ar@{..>}'[r]_{f_P}[rr] & & P_2 \\
	N_1 \ar[rr]_{f_N} \ar[ur]^{\nu_1} & & N_2 \ar[ur]_{\nu_2} \\
}\]

Category of crossed squares over groups with morphisms between crossed squares defined above is denoted by $\XSq$. Crossed squares are equivalent to the crossed modules over crossed modules \cite{Nor90}.

Now we prove that the category $\Cat(\XMod)$ of internal categories within the category of crossed modules over groups and the category $\XSq$ of crossed squares over groups are equivalent.

\begin{thm}
	The category $\Cat(\XMod)$ of internal categories within the category of crossed modules over groups and the category $\XSq$ of crossed squares over groups are equivalent.
\end{thm}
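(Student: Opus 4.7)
The plan is to construct a pair of functors $\Phi\colon \Cat(\XMod)\to \XSq$ and $\Psi\colon \XSq\to \Cat(\XMod)$ and exhibit natural isomorphisms $\Psi\Phi\simeq \mathrm{id}$, $\Phi\Psi\simeq \mathrm{id}$. The underlying idea is to apply the Brown--Spencer correspondence in a ``two-dimensional'' fashion: the columns $A_1\rightrightarrows A_0$ and $B_1\rightrightarrows B_0$ of the diagram displayed in the previous section are themselves internal categories in $\Gp$, so Brown--Spencer converts each into a crossed module, while the horizontal crossed-module structures $\alpha_1$ and $\alpha_0$ together with the action of $B_1$ on $A_1$ provide exactly the extra data (two legs and an $h$-map) that upgrade this pair of crossed modules into a crossed square.

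Given an internal category $C=(C_1,C_0,s,t,\varepsilon,m)$ in $\XMod$, define $\Phi(C)$ by taking $L=\ker s_A$, $M=A_0$, $N=\ker s_B$, $P=B_0$, with $\lambda=t_A|_L$, $\lambda'=\alpha_1|_L$ (which lands in $\ker s_B$ since $s_B\alpha_1=\alpha_0 s_A$ by (ii) of Lemma \ref{catxmodproperties}), $\mu=\alpha_0$ and $\nu=t_B|_N$. The action of $P=B_0$ on $M=A_0$ is the given one, on $N=\ker s_B$ by conjugation via $\varepsilon_B$, and on $L=\ker s_A$ by $b_0\cdot a_1=\varepsilon_B(b_0)\cdot a_1=1_{b_0}\cdot a_1$ (which preserves $\ker s_A$ by (iii) of the same lemma). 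The $h$-map is defined by
\[ h(a_0,b_1)= 1_{a_0} - (b_1\cdot 1_{a_0}) \in \ker s_A \qquad \text{for } a_0\in M,\ b_1\in N. \]
The crossed-square axioms of Definition \ref{xsq} will then be verified by direct computation: $P$-equivariance of $\lambda,\lambda'$ comes from (iii) of Lemma \ref{catxmodproperties}; the facts that $\mu,\nu,\kappa$ are crossed modules follow from the column-wise Brown--Spencer theorem together with the fact that $\alpha_0$ is already a crossed module; the identities $\lambda h(m,n)=m+n\cdot(-m)$ and $\lambda'h(m,n)=m\cdot n-n$ and their variants are read off from the explicit form of $h$ and the relations between $s_A,t_A,\alpha_1,\varepsilon_A$; and the bilinearity and $P$-equivariance of $h$ use the interchange law (condition (vii) of Lemma \ref{catxmodproperties}) and condition (ix) there.

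In the reverse direction, given a crossed square $S=(L,M,N,P)$, define $\Psi(S)$ by putting $C_0=(M,P,\mu)$ (a crossed module by axiom (i)), and taking $C_1$ to be the semidirect-product crossed module $(L\rtimes M,\ N\rtimes P,\ \lambda'\rtimes\nu)$ whose structure is provided by the Brown--Spencer construction applied to each of the crossed modules $\lambda\colon L\to M$ and $\nu\colon N\to P$; the action of $N\rtimes P$ on $L\rtimes M$ needed to make $C_1$ a crossed module is built from the $P$-actions on $L$ and $M$ together with the $h$-map, in the standard way that crossed squares encode crossed modules of crossed modules (cf. \cite{Nor90}). The source, target, identity and composition morphisms are then the Brown--Spencer formulas in each row, bundled into crossed-module morphisms; checking each of them is a morphism in $\XMod$ is precisely where the axioms involving $h$ in Definition \ref{xsq} are used --- this is the main obstacle of the proof, since one must verify simultaneously the row-wise Brown--Spencer identities and the compatibility of these identities with the vertical crossed-module actions. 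Once $\Phi$ and $\Psi$ are defined on objects, extending them to morphisms is routine, and the natural isomorphisms $\Psi\Phi\cong \mathrm{id}$, $\Phi\Psi\cong \mathrm{id}$ are built component-wise from the corresponding natural isomorphisms in the one-dimensional Brown--Spencer equivalence applied row by row.
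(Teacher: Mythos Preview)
Your approach coincides with the paper's---apply Brown--Spencer to each column and verify the crossed-square/internal-category axioms by direct computation---except that you interchange the roles of $M$ and $N$ (the paper takes $M=\ker s_B$, $N=A_0$, $\lambda=\alpha_1|_L$, $\lambda'=t_A|_L$, and $h(m,n)=m\cdot 1_n-1_n$, so your $h$ is the negative of theirs under the transpose symmetry of crossed squares). One slip to fix: in $\Psi(S)$ the boundary map $\alpha_1\colon L\rtimes M\to N\rtimes P$ must be built from $\lambda'$ and $\mu$, not $\lambda'\rtimes\nu$ as written.
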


\begin{proof}
	We first define a functor $\eta :\Cat(\XMod)\to \XSq$ as follows: Let $C=\left( {{C}_{1}},{{C}_{0}},s,t,\varepsilon ,m,n \right)$ be an object in $\Cat(\XMod)$. If we set $L=\ker {{s}_{A}}$, $M=\ker {{s}_{B}}$, $N={{A}_{0}}$, $P={{B}_{0}}$, $\lambda ={{\alpha }_{1|\ker {{s}_{A}}}}$, $\lambda '={{t}_{A|\ker {{s}_{A}}}}$, $\mu ={{t}_{B|\ker {{s}_{B}}}}$ and $\nu ={{\alpha }_{0}}$ then $\eta \left( C \right)=S=\left( L,M,N,P \right)$ becaomes a crossed square with the function $h(m,n)=m\cdot {{1}_{n}}-{{1}_{n}}$ for all $m\in M$ and $n\in N$.
	Here $\left( L,M,\lambda  \right)$ is a crossed module since it is the kernel crossed module of \[s=\left\langle {{s}_{A}},{{s}_{B}} \right\rangle :\left( {{A}_{1}},{{B}_{1}},{{\alpha }_{1}} \right)\to \left( {{A}_{0}},{{B}_{0}},{{\alpha }_{0}} \right).\]
	Moreover we know that  $\left( L,N,\lambda ' \right)$ and $\left( M,P,\mu  \right)$ are crossed modules by Brown \& Spencer Theorem. Finally $\left( N,P,\nu  \right)$ is already a crossed module since it is $\left( {{A}_{0}},{{B}_{0}},{{\alpha }_{0}} \right)$. Here the actions of $P$ on $N$ is already given, on $M$ is given by $p\cdot m={{1}_{p}}+m-{{1}_{p}}$ and on $L$ is given by $p\cdot l={{1}_{p}}\cdot l$ (where the action on the right of the equation is the action of ${{B}_{1}}$ on ${{A}_{1}}$) for $p\in P$, $m\in M$ and $l\in L$. Now we need to show that the conditions given in the Definition \ref{xsq} is satisfied.
	\begin{enumerate}[label={\textbf{(\roman{*})}}]
		\item We need to show that $\lambda $, ${\lambda }'$ are $P$-equivariant and $\kappa =\mu \lambda $ is a crossed module. Let $l\in L$, and $p\in P$. Then
		\[\lambda (p\cdot l)={{\alpha }_{1}}({{1}_{p}}\cdot l)={{1}_{p}}+{{\alpha }_{1}}(l)-{{1}_{p}}=p\cdot \lambda (l)\]
		and
		\[{\lambda }'(p\cdot l)={{t}_{A}}({{1}_{p}}\cdot l)={{t}_{B}}\left( {{1}_{p}} \right)\cdot {{t}_{A}}(l)=p\cdot {\lambda }'(l)\]
		so $\lambda $ and ${\lambda }'$ are $P$-equivariant. Now we need to show that $\left( L,P,\kappa  \right)$ is a crossed module. So
		\begin{enumerate}[label={\textbf{(CM\arabic{*})}},leftmargin=1.5cm]
			\item Let $l\in L$, and $p\in P$. Then
			\[\begin{array}{rl}
			\kappa \left( p\cdot l \right) & =\mu \lambda \left( p\cdot l \right) \\
			& =\mu \left( p\cdot \lambda \left( l \right) \right) \\
			& =p+\mu \left( \lambda \left( l \right) \right)-p \\
			& =p+\kappa \left( l \right)-p
			\end{array}\]
			\item Let $l,l'\in L$. Then
			\[\begin{array}{rl}
			\kappa \left( l \right)\cdot l' & =\mu \left( \lambda \left( l \right) \right)\cdot l' \\
			& ={{1}_{\mu \left( \lambda \left( l \right) \right)}}\cdot l' \\
			& ={{1}_{\nu \left( \lambda '\left( l \right) \right)}}\cdot l' \\
			& =\lambda \left( {{1}_{\lambda '\left( l \right)}} \right)\cdot l' \\
			& ={{1}_{\lambda '\left( l \right)}}+l'-{{1}_{\lambda '\left( l \right)}} \\
			& =l+l'-l
			\end{array}\]
		\end{enumerate}	
		\item Let $m\in M$ and $n\in N$. Then
		\[\begin{array}{rl}
		\lambda h(m,n) & =\lambda \left( m\cdot {{1}_{n}}-{{1}_{n}} \right) \\
		& =\lambda \left( m\cdot {{1}_{n}} \right)-\lambda \left( {{1}_{n}} \right) \\
		& =m+\lambda \left( {{1}_{n}} \right)-m-\lambda \left( {{1}_{n}} \right) \\
		& =m+n\cdot \left( -m \right)
		\end{array}\]
		and
		\[\begin{array}{rl}
		{\lambda }'h(m,n)& ={\lambda }'\left( m\cdot {{1}_{n}}-{{1}_{n}} \right) \\
		& ={\lambda }'\left( m\cdot {{1}_{n}} \right)-{\lambda }'\left( {{1}_{n}} \right) \\
		& =\mu \left( m \right)\cdot {\lambda }'\left( {{1}_{n}} \right)-{\lambda }'\left( {{1}_{n}} \right) \\
		& =\mu \left( m \right)\cdot n-n \\
		& =m\cdot n-n
		\end{array}\]
		\item Let $l\in L$, $m\in M$ and $n\in N$. Then
		\[\begin{array}{rl}
		h(\lambda (l),n)& =\lambda (l)\cdot {{1}_{n}}-{{1}_{n}} \\
		& =\left( l+{{1}_{n}}-l \right)-{{1}_{n}} \\
		& =l+\left( {{1}_{n}}-l-{{1}_{n}} \right) \\
		& =l+n\cdot (-l)
		\end{array}\]
		and
		\[\begin{array}{rl}
		h\left( m,{\lambda }'\left( l \right) \right)& =m\cdot {{1}_{{\lambda }'\left( l \right)}}-{{1}_{{\lambda }'\left( l \right)}} \\
		& =\left( m\cdot {{1}_{{\lambda }'\left( l \right)}}-{{1}_{{\lambda }'\left( l \right)}}+l \right)-l \\
		& =\left( m\cdot {{1}_{{\lambda }'\left( l \right)}}\circ l \right)-l \\
		& =\left( \left( m\cdot {{1}_{{\lambda }'\left( l \right)}} \right)\circ \left( {{1}_{0}}\cdot l \right) \right)-l \\
		& =((m\circ {{1}_{0}})\cdot ({{1}_{{\lambda }'\left( l \right)}}\circ l))-l \\
		& =m\cdot l-l
		\end{array}\]
		\item Let $m,{m}'\in M$ and $n,{n}'\in N$. Then
		\[\begin{array}{rl}
		h\left( m+{m}',n \right)& =\left( m+{m}' \right)\cdot {{1}_{n}}-{{1}_{n}} \\
		& =m\cdot \left( {m}'\cdot {{1}_{n}} \right)-{{1}_{n}} \\
		& =m\cdot \left( {m}'\cdot {{1}_{n}} \right)+m\cdot \left( -{{1}_{n}}+{{1}_{n}} \right)-{{1}_{n}} \\
		& =m\cdot \left( {m}'\cdot {{1}_{n}}-{{1}_{n}} \right)+m\cdot {{1}_{n}}-{{1}_{n}} \\
		& =m\cdot h\left( {m}',n \right)+h\left( m,n \right)
		\end{array}\]
		and
		\[\begin{array}{rl}
		h\left( m,n+{n}' \right)& =m\cdot {{1}_{n+n'}}-{{1}_{n+n'}} \\
		& =m\cdot \left( {{1}_{n}}+{{1}_{n'}} \right)-{{1}_{n'}}-{{1}_{n}} \\
		& =\left( m\cdot {{1}_{n}}-{{1}_{n}} \right)+{{1}_{n}}+\left( m\cdot {{1}_{n'}}-{{1}_{n'}} \right)-{{1}_{n'}} \\
		& =h\left( m,n \right)+n\cdot h\left( m,{n}' \right)
		\end{array}\]
		\item Let $m\in M$, $n\in N$ and $p\in P$. Then
		\[\begin{array}{rl}
		& h\left( p\cdot m,p\cdot n \right)=h\left( {{1}_{p}}+m-{{1}_{p}},p\cdot n \right) \\
		& =\left( {{1}_{p}}+m-{{1}_{p}} \right)\cdot {{1}_{p\cdot n}}-{{1}_{p\cdot n}} \\
		& =\left( {{1}_{p}}+m-{{1}_{p}} \right)\cdot \left( {{1}_{p}}\cdot {{1}_{n}} \right)-\left( {{1}_{p}}\cdot {{1}_{n}} \right) \\
		& ={{1}_{p}}\cdot \left( m\cdot {{1}_{n}} \right)+{{1}_{p}}\cdot \left( -{{1}_{n}} \right) \\
		& ={{1}_{p}}\cdot \left( m\cdot {{1}_{n}}-{{1}_{n}} \right) \\
		& =p\cdot h\left( m,n \right)
		\end{array}\]
	\end{enumerate}
	
	Now let $f=\left( {{f}_{1}}=\left\langle f_{1}^{A},f_{1}^{B} \right\rangle ,{{f}_{0}}=\left\langle f_{0}^{A},f_{0}^{B} \right\rangle  \right):C\to C'$ be a morphism in $\Cat(\XMod)$. Then
	\[\eta \left( f \right)=\left( {{f}_{L}}=f_{1|\ker {{s}_{A}}}^{A},{{f}_{M}}=f_{1|\ker {{s}_{B}}}^{B},{{f}_{N}}=f_{0}^{A},{{f}_{P}}=f_{0}^{B} \right):S\to S'\]
	is a morphism of crossed squares.
	
	Conversely define a functor $\psi : \XSq\rightarrow \Cat(\XMod)$ as follows: Let $S=\left( L,M,N,P \right)$ be a crossed square over groups. Then \[\psi \left( S \right)=C=\left( {{C}_{1}}=\left( {{A}_{1}},{{B}_{1}},{{\alpha }_{1}} \right),{{C}_{0}}=\left( {{A}_{0}},{{B}_{0}},{{\alpha }_{0}} \right),s,t,\varepsilon ,m \right)\] is an internal category within the category of crossed modules over groups where
	$\left( {{A}_{1}},{{B}_{1}},{{\alpha }_{1}} \right)=\left( L\rtimes N,M\rtimes P,\lambda \times \nu  \right)$, $\left( {{A}_{0}},{{B}_{0}},{{\alpha }_{0}} \right)=\left( N,P,\nu  \right)$, ${{s}_{A}}\left( l,n \right)=n$, ${{s}_{B}}\left( m,p \right)=p$, ${{t}_{A}}\left( l,n \right)=\lambda '\left( l \right)+n$, ${{t}_{B}}\left( m,p \right)=\mu \left( m \right)+p$, ${{\varepsilon }_{A}}\left( n \right)=\left( 0,n \right)$, ${{\varepsilon }_{B}}\left( p \right)=\left( 0,p \right)$,
	\[\left( l',\lambda '\left( l \right)+n \right)\circ \left( l,n \right)=\left( l'+l,n \right)\]
	and
	\[\left( m',\lambda '\left( m \right)+p \right)\circ \left( m,p \right)=\left( m'+m,p \right).\]
	We know that ${{C}_{0}}$ is a crossed module over groups. First we need to show that $\left( L\rtimes N,M\rtimes P,\lambda \times \nu  \right)$ is a crossed module with the action of $M\rtimes P$ on $L\rtimes N$ is
	\[\left( m,p \right)\cdot \left( l,n \right)=\left( m\cdot \left( p\cdot l \right)+h\left( m,p\cdot n \right),p\cdot n \right).\]
	\begin{enumerate}[label={\textbf{(CM\arabic{*})}},leftmargin=1.5cm]
		\item Let $\left( l,n \right)\in L\rtimes N$ and $\left( m,p \right)\in M\rtimes P$. Then
		\[\begin{array}{rl}
		\left( \lambda \times \nu  \right)\left( \left( m,p \right)\cdot \left( l,n \right) \right) & =\left( \lambda \times \nu  \right)\left( m\cdot \left( p\cdot l \right)+h\left( m,p\cdot n \right),p\cdot n \right) \\
		& =\left( \lambda \left( m\cdot \left( p\cdot l \right)+h\left( m,p\cdot n \right) \right),\nu \left( p\cdot n \right) \right) \\
		& =\left( \lambda \left( m\cdot \left( p\cdot l \right) \right)+\lambda \left( h\left( m,p\cdot n \right) \right),p+\nu \left( n \right)-p \right) \\
		& =\left( m+\lambda \left( p\cdot l \right)-m+m+\left( p\cdot n \right)\cdot \left( -m \right),p+\nu \left( n \right)-p \right) \\
		& =\left( m+p\cdot \lambda \left( l \right)+\left( p\cdot n \right)\cdot \left( -m \right),p+\nu \left( n \right)-p \right) \\
		& =\left( m+p\cdot \lambda \left( l \right)+\left( p+\nu \left( n \right)-p \right)\cdot \left( -m \right),p+\nu \left( n \right)-p \right) \\
		& =\left( m+p\cdot \lambda \left( l \right)+\left( p+\nu \left( n \right) \right)\cdot \left( \left( -p \right)\cdot \left( -m \right) \right),p+\nu \left( n \right)-p \right) \\
		& =\left( m+p\cdot \lambda \left( l \right),p+\nu \left( n \right) \right)+\left( \left( -p \right)\cdot \left( -m \right),-p \right) \\
		& =\left( m,p \right)+\left( \lambda \left( l \right),\nu \left( n \right) \right)-\left( m,p \right) \\
		& =\left( m,p \right)+\left( \lambda \times \nu  \right)\left( l,n \right)-\left( m,p \right).
		\end{array}\]
		\item Let $\left( l,n \right),\left( l',n' \right)\in L\rtimes N$. Then
		\[\begin{array}{rl}
		\left( \lambda \times \nu  \right)\left( \left( l,n \right) \right)\cdot \left( l',n' \right) & =\left( \lambda \left( l \right),\nu \left( n \right) \right)\cdot \left( l',n' \right) \\
		& =\left( \lambda \left( l \right)\cdot \left( \nu \left( n \right)\cdot l' \right)+h\left( \lambda \left( l \right),\nu \left( n \right)\cdot n' \right),\nu \left( n \right)\cdot n' \right) \\
		& =\left( \lambda \left( l \right)\cdot \left( n\cdot l' \right)+h\left( \lambda \left( l \right),n+n'-n \right),n+n'-n \right) \\
		& =\left( l+n\cdot l'-l+l+\left( n+n'-n \right)\cdot \left( -l \right),n+n'-n \right) \\
		& =\left( l+n\cdot l'+\left( n+n' \right)\cdot \left( \left( -n \right)\cdot \left( -l \right) \right),n+n'-n \right) \\
		& =\left( l+n\cdot l',n+n' \right)+\left( \left( -n \right)\cdot \left( -l \right),-n \right) \\
		& =\left( l,n \right)+\left( l',n' \right)-\left( l,n \right).
		\end{array}\]
	\end{enumerate}
	Thus ${{C}_{1}}=\left( L\rtimes N,M\rtimes P,\lambda \times \nu  \right)$ is a crossed module. Now we need to show that $\psi \left( S \right)=C$ satisfies the conditions given in Lemma \ref{catxmodproperties} We know that ${{s}_{A}},{{s}_{B}},{{t}_{A}},{{t}_{B}},{{\varepsilon }_{A}},{{\varepsilon }_{B}},{{m}_{A}}$ and ${{m}_{B}}$ are group homomorphisms. So the conditions (i), (iv) and (vii) holds.
	\begin{enumerate}
		\item[\textbf{(ii)}] Let $\left( l,n \right)\in L\rtimes N$. Then
		\[\begin{array}{rl}
		\nu {{s}_{A}}\left( \left( l,n \right) \right)& =\nu \left( n \right) \\
		& ={{s}_{A}}\left( \lambda \left( l \right),\nu \left( n \right) \right) \\
		& ={{s}_{B}}\left( \left( \lambda \times \nu  \right)\left( l,n \right) \right)
		\end{array}\]
		and
		\[\begin{array}{rl}
		\nu {{t}_{A}}\left( l,n \right)& =\nu \left( \lambda '\left( l \right)+n \right) \\
		& =\nu \left( \lambda '\left( l \right) \right)+\nu \left( n \right) \\
		& =\mu \left( \lambda \left( l \right) \right)+\nu \left( n \right) \\
		& ={{t}_{B}}\left( \left( \lambda \left( l \right),\nu \left( n \right) \right) \right) \\
		& ={{t}_{B}}\left( \left( \lambda \times \nu  \right)\left( l,n \right) \right)
		\end{array}\]
		\item[\textbf{(iii)}] Let $\left( l,n \right)\in L\rtimes N$ and $\left( m,p \right)\in M\rtimes P$. Then
		${{s}_{A}}\left( \left( m,p \right)\cdot \left( l,n \right) \right)=p\cdot n={{s}_{B}}\left( m,p \right)\cdot {{s}_{A}}\left( l,n \right)$
		and
		\[\begin{array}{rl}
		{{t}_{A}}\left( \left( m,p \right)\cdot \left( l,n \right) \right)& =\lambda '\left( m\cdot \left( p\cdot l \right)+h\left( m,p\cdot n \right) \right)+p\cdot n \\
		& =\lambda '\left( m\cdot \left( p\cdot l \right) \right)+\lambda '\left( h\left( m,p\cdot n \right) \right)+p\cdot n \\
		& =\lambda '\left( \left( \mu \left( m \right)+p \right)\cdot l \right)+\left( m\cdot \left( p\cdot n \right)-p\cdot n \right)+p\cdot n \\
		& =\left( \mu \left( m \right)+p \right)\cdot \lambda '\left( l \right)+\left( \mu \left( m \right)+p \right)\cdot n \\
		& =\left( \mu \left( m \right)+p \right)\cdot \left( \lambda '\left( l \right)+n \right) \\
		& ={{t}_{B}}\left( m,p \right)\cdot {{t}_{A}}\left( l,n \right).
		\end{array}\]
		\item[\textbf{(v)}] Let $n\in N$. Then
		\[\begin{array}{rl}
		{{\alpha }_{1}}{{\varepsilon }_{A}}\left( n \right)& ={{\alpha }_{1}}\left( 0,n \right) \\
		& =\left( \lambda \times \nu  \right)\left( 0,n \right) \\
		& =\left( \lambda \left( 0 \right),\nu \left( n \right) \right) \\
		& ={{\varepsilon }_{B}}\nu \left( n \right) \\
		& ={{\varepsilon }_{B}}{{\alpha }_{0}}\left( n \right).
		\end{array}\]
		\item[\textbf{(vi)}] Let $n\in N$ and $p\in P$. Then
		\[\begin{array}{rl}
		{{\varepsilon }_{A}}\left( p\cdot n \right)& =\left( 0,p\cdot n \right) \\
		& =\left( 0,p \right)\cdot \left( 0,n \right) \\
		& ={{\varepsilon }_{B}}\left( p \right)\cdot {{\varepsilon }_{A}}\left( n \right).
		\end{array}\]
		\item[\textbf{(viii)}] Let $\left( l,n \right),\left( l',n' \right)\in L\rtimes N$ such that $n'=\lambda '\left( l \right)+n$. Then
		\[\begin{array}{rl}
		{{\alpha }_{1}}{{m}_{A}}\left( \left( l',n' \right),\left( l,n \right) \right)& ={{\alpha }_{1}}\left( \left( l',n' \right)\circ \left( l,n \right) \right) \\
		& =\left( \lambda \times \nu  \right)\left( \left( l'+l,n \right) \right) \\
		& =\left( \lambda \left( l'+l \right),\nu \left( n \right) \right) \\
		& =\left( \lambda \left( l' \right)+\lambda \left( l \right),\nu \left( n \right) \right) \\
		& =\left( \lambda \left( l' \right),\mu \left( \lambda \left( l \right) \right)+\nu \left( n \right) \right)\circ \left( \lambda \left( l \right),\nu \left( n \right) \right) \\
		& =\left( \lambda \left( l' \right),\nu \left( \lambda '\left( l \right) \right)+\nu \left( n \right) \right)\circ \left( \lambda \left( l \right),\nu \left( n \right) \right) \\
		& =\left( \lambda \left( l' \right),\nu \left( \lambda '\left( l \right)+n \right) \right)\circ \left( \lambda \left( l \right),\nu \left( n \right) \right) \\
		& =\left( \lambda \left( l' \right),\nu \left( n' \right) \right)\circ \left( \lambda \left( l \right),\nu \left( n \right) \right) \\
		& ={{m}_{B}}\left( \left( \lambda \times \nu  \right)\left( l',n' \right),\left( \lambda \times \nu  \right)\left( l,n \right) \right) \\
		& ={{m}_{B}}\left( {{\alpha }_{1}}\times {{\alpha }_{1}} \right)\left( \left( l',n' \right),\left( l,n \right) \right).
		\end{array}\]
		\item[\textbf{(ix)}] Let $\left( l,n \right),\left( l',n' \right)\in L\rtimes N$ and $\left( m,p \right),\left( m',p' \right)\in M\rtimes P$  such that $n'=\lambda '\left( l \right)+n$ and $p'=\mu \left( m \right)+p$. Firstly, \[\begin{array}{rl}
		h\left( m'+m,p\cdot n \right) & = h\left( m-m+m'+m,p\cdot n \right) \\
		& = m\cdot h\left( \left( -m \right)\cdot m',p\cdot n \right)+h\left( m,p\cdot n \right) \\
		& = h\left( m\cdot \left( \left( -m \right)\cdot m' \right),m\cdot \left( p\cdot n \right) \right)+h\left( m,p\cdot n \right) \\
		& = h\left( m',p'\cdot n \right)+h\left( m,p\cdot n \right).
		\end{array}\] Then $\left( \left( m',p' \right)\circ \left( m,p \right) \right)\cdot \left( \left( l',n' \right)\circ \left( l,n \right) \right) =\left( m'+m,p \right)\cdot \left( l'+l,n \right)$ and \[\small\begin{array}{rl}
		\left( m'+m,p \right)\cdot \left( l'+l,n \right)
		& =\left( \left( m'+m \right)\cdot \left( p\ cdot \left( l'+l \right) \right)+h\left( m'+m,p\cdot n \right),p\cdot n \right) \\
		& =\left( m'\cdot \left( p'\cdot l' \right)+m'\cdot \left( p'\cdot l \right)+h\left( m',p'\cdot n \right)+h\left( m,p\cdot n \right),p\cdot n \right) \\
		& =\left( m'\cdot \left( p'\cdot l' \right)+h\left( m',p'\cdot \lambda '\left( l \right)+p'\cdot n \right)+m\cdot \left( p\cdot l \right)+h\left( m,p\cdot n \right),p\cdot n \right) \\
		& =\left( m'\cdot \left( p'\cdot l' \right)+h\left( m',p'\cdot \left( \lambda '\left( l \right)+n \right) \right)+m\cdot \left( p\cdot l \right)+h\left( m,p\cdot n \right),p\cdot n \right) \\
		& =\left( m'\cdot \left( p'\cdot l' \right)+h\left( m',p'\cdot n' \right)+m\cdot \left( p\cdot l \right)+h\left( m,p\cdot n \right),p\cdot n \right) \\
		& =\left( m'\cdot \left( p'\cdot l' \right)+h\left( m',p'\cdot n' \right),p'\cdot n' \right)\circ \left( m\cdot \left( p\cdot l \right)+h\left( m,p\cdot n \right),p\cdot n \right) \\
		& =\left( \left( m',p' \right)\cdot \left( l',n' \right) \right)\circ \left( \left( m,p \right)\cdot \left( l,n \right) \right) \\
		\end{array}\]
	\end{enumerate}
	
	Thus $\psi \left( S \right)=C$ is an object in $\Cat(\XMod)$. Now let 
	\[f=\left( {{f}_{L}},{{f}_{M}},{{f}_{N}},{{f}_{P}} \right)\colon{{S}_{1}}=\left( {{L}_{1}},{{M}_{1}},{{N}_{1}},{{P}_{1}} \right)\rightarrow{{S}_{2}}=\left( {{L}_{2}},{{M}_{2}},{{N}_{2}},{{P}_{2}} \right)\]
	be a morphism of crossed squares. Then $\psi \left( f \right)=\left( \left\langle f_L\times f_N,f_M\times f_P\right\rangle, \left\langle f_N, f_P \right\rangle \right)\colon C\rightarrow C' $ is a morphism in $\Cat(\XMod)$ where $\psi(S_1)=C$ and $\psi(S_2)=C'$.
	
	Finally we show that composition of these functors are naturally isomorphic to the identity functors on $\Cat(\XMod)$ and $\XSq$ respectively. For any object $C$ in $\Cat(\XMod)$ the natural isomorphism $U\colon 1_{\Cat(\XMod)}\Rightarrow \psi\eta$ is given by $U_C=\left( f_1=\left\langle f_{1}^{A},f_{1}^{B} \right\rangle, f_0=\left\langle f_{0}^{A},f_{0}^{B} \right\rangle \right) $ where
	$f_{1}^{A}(a_1) = \left( a_1-1_{s_A(a_1)}, s_A(a_1) \right)$, $f_{1}^{B}(b_1) = \left( b_1-1_{s_B(b_1)}, s_B(b_1) \right)$, $f_{0}^{A}=1_{A_{0}}$ and $f_{0}^{B}=1_{B_{0}}$ for all $a_1\in A_1$ and $b_1\in B_1$.
	
	Conversely, for any object $S=(L,M,N,P)$ in $\XSq$ the natural isomorphism $T\colon\eta\psi\Rightarrow 1_{\XSq}$ is given by $T_S=\left( f_L=\pi_1, f_M=\pi_1, f_N=1_N, f_P=1_P \right) $. This completes the proof.
\end{proof}

Now we can give examples of crossed squares which are obtained from examples of internal categories within the category of crossed modules.

\begin{ex}
	Let $(A,B,\alpha)$ be a crossed module. Then the diagram
	\[\xymatrix{
		A \ar[r]^\alpha \ar[d]_{1}  & B \ar[d]^{1} \\
		A  \ar[r]_{\alpha}  & B}\]
	has a structure of a crossed square where $h(b,a)=b\cdot a-a$ for all $a\in A$ and $b\in B$.
\end{ex}

\begin{ex}
	Let $(A,B,\alpha)$ be a crossed module. Then the diagram
	\[\xymatrix{
		0 \ar[r]^0 \ar[d]_{0}  & A \ar[d]^{\alpha}  \\
		0 \ar[r]_{0}    & B}\]
	forms a crossed square where $h(a,0)=0$ for all $a\in A$.
\end{ex}

\begin{ex}
	Let $(A,B,\alpha)$ be a topological crossed module. Then we know that $(\pi A,A,s_A,t_A,\varepsilon_A,m_A)$ and $(\pi B,B,s_B,t_B,\varepsilon_B,m_B)$ are group-groupoids. Then
	\[\xymatrix{
		\ker s_A \ar[r]^{\pi\alpha} \ar[d]_{t_A}  & \ker s_B \ar[d]^{t_B}  \\
		A \ar[r]_{\alpha}    & B}\]
	has a crossed square structure where $h([\beta],a)=[\beta\cdot a-a]$ for all $[\beta]\in \ker s_B$ and $a\in A$. Here the path $(\beta\cdot a-a)\colon [0,1]\rightarrow A$ is given by $(\beta\cdot a-a)(r)=\beta(r)\cdot a-a$ for all $r\in [0,1]$.
\end{ex}

\section{Conclusion}

We proved that the category $\Cat(\XMod)$ of internal categories within the category of crossed modules over groups and the category $\XSq$ of crossed squares over groups are equivalent. Since crossed squares model all connected homotopy 3-types so are internal categories in within the category of crossed modules. 

For further work, in a similar way of thinking one can obtain same results in a more generic algebraic category namely the category of groups with operations or in higher dimensional crossed modules \cite{Ell88}. Also in the light of the results given in \cite{MSA15}, notions of normal subcrossed square and of quotient crossed square can be obtained.

%\noindent {\bf Acknowledgment.} The authors would like to thank the referee for the valuable suggestions and comments.

\end{document}